
\documentclass[11pt,reqno]{amsart}

\usepackage{amsmath}
\usepackage{amssymb}
\usepackage{amscd}
\usepackage{centernot} 
\usepackage{enumerate}
\usepackage{mathabx}
\usepackage{eucal}
\usepackage{epic}
\usepackage{graphicx}
\usepackage{wrapfig}
\usepackage[all]{xy}
\usepackage{hyperref}
\usepackage{color}

\setlength{\topmargin}{-0.4in}
\setlength{\textheight}{9.0in}     
\setlength{\textwidth}{6.3in}      
\setlength{\oddsidemargin}{.120in}  %
\setlength{\evensidemargin}{.120in} %

\theoremstyle{plain}
\newtheorem{theorem}{Theorem}[section]
\newtheorem{lemma}[theorem]{Lemma}

\theoremstyle{remark}
\newtheorem{remark}[theorem]{Remark}       

\newtheorem{example}[theorem]{Example}

\newtheorem{assumption}[theorem]{Assumption}

\newtheorem*{definition*}{Definition}

\newcommand{\hilit}[1]{{{{#1}}}}  

\newcommand{\pirt}{\varPi^h_{\scriptscriptstyle{\mathrm{RT}}}}
\newcommand{\prj}{\varPi^h_{k-1}}
\newcommand{\Poincare}{Poincar{\'{e}}}     
\newcommand{\Babuska}{Babu{\v{s}}ka}
\def\d{\partial}
\newcommand{\ip}[1]{\langle {#1} \rangle}
\newcommand{\veps}{\varepsilon}
\newcommand{\Tfae}{The following are equivalent}
\newcommand{\CCC}{\mathbb{C}}
\newcommand{\RRR}{\mathbb{R}}
\newcommand{\DD}{\mathcal{D}}
\newcommand{\diam}{\mathop\mathrm{diam}}
\newcommand{\dive}{\mathop\mathrm{div}}
\newcommand{\grad}{\mathop{\mathrm{grad}}}
\newcommand{\Hdiv}[1]{H(\dive,{#1})}


\newcommand{\hB}{\hat B}

\newcommand{\hb}{\hat b}
\newcommand{\hx}{\hat x}
\newcommand{\hz}{\hat z}
\newcommand{\hw}{\hat w}
\newcommand{\hX}{\hat X}
\newcommand{\hq}{\hat q}
\newcommand{\hqh}{\hat q_{n,h}}
\newcommand{\hr}{\hat r}
\newcommand{\hrh}{\hat r_{n,h}}

\newcommand{\om}{\varOmega}
\newcommand{\oh}{{\varOmega_h}}

\newcommand{\Xoh}{X_{h,0}}
\newcommand{\xoh}{x_{h,0}}

\newcommand{\Yohr}{Y_{h,0}^r}

\newcommand{\xh}{x_h}

\def\Grad{\nabla}

\title[DPG convergence rates]
{Convergence rates of the DPG method with \\ reduced test space degree}

\author[Bouma]{Timaeus Bouma}
\author[Gopalakrishnan]{Jay Gopalakrishnan}
\author[Harb]{Ammar Harb}

\begin{document}

\begin{abstract}  
This paper presents a duality theorem of the Aubin-Nitsche type for
  discontinuous Petrov Galerkin (DPG) methods. This explains the
  numerically observed higher convergence rates in weaker norms.
  Considering the specific example of the mild-weak (or primal) DPG
  method for the Laplace equation, two further results are obtained.
  First, the DPG method continues to be solvable even when the test space
  degree is reduced, provided it is odd. Second, a non-conforming
  method of analysis is developed to explain the numerically observed
  convergence rates for a test space of reduced degree.


\end{abstract}

\keywords{least-squares, discontinuous Petrov Galerkin, DPG method, Strang lemma, Aubin-Nitsche, duality argument}

\address{\hrule\bigskip Address for all authors: PO Box 751, Portland State University, Portland, OR 97207-0751.}

\thanks{This work was partially supported by the NSF under grant 
DMS-1318916 and by the AFOSR under grant FA9550-12-1-0484.}

\maketitle

\section{Introduction}  \label{sec:introduction}

The purpose of this note is to provide a theoretical explanation for
some numerically observed convergence rates of the discontinuous
Petrov-Galerkin (DPG) method. While some aspects of the theory that
follows are general, we will use the Laplace equation throughout as
the example to illustrate the main points.  There are two DPG methods
for the Laplace's equation. One is based on an ultra-weak
formulation~\cite{DemkoGopal11a} (where constitutive and conservation
equations are both integrated by parts) while the other is based on
the so-called mild-weak, or primal formulation, developed
in~\cite{BroerSteve12,DemkoGopal13a} (where only the conservation
equation is integrated by parts). The example which motivates our
study is the latter.

The method will be precisely introduced later. But to outline this
study, consider applying the method on a two-dimensional domain $\om$
meshed by a geometrically conforming finite element mesh of triangles
of mesh size $h$. The method produces an approximation $u_h$ to the
solution $u$ of the Laplace's equation in the interior of the mesh
elements, as well as an approximation to the flux $q$ on the element
interfaces. The first is a polynomial of degree at most $k_u$ on each
mesh element and the second is a polynomial of degree at most $k_q$ on
each mesh edge. The method uses test functions $v$ that are
polynomials of degree at most $k_v$ on each mesh element. It is the
interplay between the convergence rates and the degrees $k_u, k_q,k_v$
that we intend to study.

We identify three cases for study. Let $k\ge 1$ be an integer. The
cases are as shown:
\begin{center}
\begin{tabular}{r|ccc}
          &  $k_u$ &  $k_q$  &  $k_v$ 
  \\ \hline
  Case~1: &   $k$  &  $k-1$  & $k+1,$ 
  \\
  Case~2: &   $k-1$& $k-1$   & $k,$ 
  \\
  Case~3: &   $k$  & $k-1$   & $k.$ 
\end{tabular}  
\end{center}
The first case is the standard DPG setting for which error estimates
in the energy norm are proven in~\cite{DemkoGopal13a}. The other two
cases are motivated by a desire to reduce the test space degree and
have not been analyzed previously.

\hilit{What is the practical importance of reduced order test spaces? We
  give a three-part answer: First, consider the left hand side matrix
  of the linear system arising from the DPG method. Its assembly
  requires computation of the Gram matrix of the test space. Even
  though this matrix is block diagonal, it is of some practical
  interest to reduce the block size, especially when operating near
  the limit of memory bandwidth in multi-core architectures. Second,
  consider the right hand side computation. In cases where load terms
  are expensive to evaluate, reduction of test space degree brings
  significant computational savings. Finally, the third and the most
  compelling reason that prompted us to investigate this issue, is
  that there are practical limits on the degree of polynomials one can
  use in most finite element software. We prefer to hit this practical
  limiting degree with the trial space, rather than with the test space,
  because it is the approximation properties of the trial space that
  determines the final solution quality.  }

Our numerical experience with a few examples with smooth solutions,
one of which is fully reported in Section~\ref{sec:numerical}, is
summarized in Table~\ref{tab:rates}.  We observed that Case~2 is not
always stable: It yielded singular stiffness matrices for some even
$k$. However, when $k$ is odd, it converged, albeit at one order less
than the standard DPG case displayed in the first row. Keeping $k$ odd
and moving to Case~3, we find that the original DPG convergence rates
can be recovered, in spite of using a smaller $k_v$. Finally, we
observed that the convergence rate in $L^2(\om)$, in all cases, is one
order higher than in $H^1(\om)$.  These observations motivate our
ensuing theoretical studies.

\begin{table}[h]
\begin{center}
  \caption{Summary of numerically observed convergence rates}
\begin{tabular}{|l|c|c|}
\hline
          & \multicolumn{2}{c|}{$h$-convergence rates of $u_h$} \\
          \cline{2-3}
          &  in $H^1(\om)$ &  in $L^2(\om)$  
  \\ \hline
  Case~1   &   $k$          &  $k+1$     
  \\
  Case~2 
  ($k$ odd) &   $k-1$        & $k$
  \\
  Case~3 
  ($k$ odd) &   $k$  & $k+1$      
  \\ \hline
\end{tabular}  
\end{center}
  \label{tab:rates}
\end{table}
 
We explain the higher convergence rate in $L^2(\om)$ by developing a
duality argument for DPG methods. The duality theory is general and
can be applied beyond the Laplace example. We also give a complete
theoretical explanation for the even-odd behavior, including a
negative result by counterexample for even $k$, and a proof of a
positive result for odd $k$.  In explaining Case~3, we highlight a
connection between the DPG method and a weakly conforming method, and
show how to use a nonconforming-type analysis, using the second Strang
lemma, in the DPG context.

In the next section, we gather a number of abstract results applicable
to any DPG method in a general framework consisting of a trial space
of interior and interface variables. In Section~\ref{sec:laplace},
we introduce the DPG method for the Dirichlet problem and in distinct
subsections, provide explanations for the convergence rates in the
above-mentioned three cases. Finally in
Section~\ref{sec:numerical}, we present details of numerical
experiments \hilit{and discuss the practical importance of lower test order test spaces}.

\section{General results}  \label{sec:general}

Suppose $X_0$, $\hX$, and $Y$ are Hilbert spaces over $\CCC$.
Solutions are sought in the ``trial space'' $X=X_0 \times \hX$ and
have an ``interior'' component in $X_0$ and an ``interface'' component
in $\hX$.  Suppose there are continuous sesquilinear forms $\hat
b(\cdot,\cdot): \hX \times Y \to \CCC$ and $b_0(\cdot,\cdot): X_0
\times Y \to \CCC$, and let $b(\cdot,\cdot): X \times Y \to \CCC$ be
set by
\[
b( \,(w,\hw), y\,) = b_0( w, y) + \hb(\hw, y),
\]
for all $(w,\hw)\in X$ and $y\in Y$.  Let $Y^*$ denote the space of
continuous conjugate-linear functionals on $Y$. Given any $\ell \in Y^*$ we are interested in approximating an  $x \equiv (x_0, \hx) \in X$
satisfying
\begin{equation}
  \label{eq:weakform}
  b(x,y) = \ell(y)\qquad\forall y\in Y. 
\end{equation}
Let $\Xoh \subseteq X_0$ and $\hX_h \subseteq \hX$ be
finite-dimensional subspaces and let $X_h = \Xoh \times \hX_h$.  Let
$Y^r$ denote a finite-dimensional subspace of $Y$ and let $T^r: X \to
Y^r$ be defined by $ (T^r w, y)_Y = b(w,y)$ for all $y \in Y^r$. Here
and throughout $(\cdot,\cdot)_Y$ denotes the inner product in $Y$. The
DPG method for~\eqref{eq:weakform} computes $x_h \equiv (\xoh,\hx_h)$
in $X_h $ satisfying
\begin{equation}
  \label{eq:pdpg}
  b({\xh},{y}) = \ell({y}),
  \qquad\forall {y} 
  \in Y_h^r  =  T^r(X_h).
\end{equation}
A fundamental quasioptimality result for DPG methods is stated in
Theorem~\ref{thm:inex} below. It holds under these assumptions.

\begin{assumption}
  \label{asm:wellposed}
  Suppose $\{ z \in X: \;b(z, y)=0,\; \forall y\in Y\} = \{0\}$ and
  suppose there exist $C_1, C_2>0$ such that
  \begin{equation}
    \label{eq:9}
      C_1 \| y \|_Y \le 
  \sup_{0\ne z \in X} 
  \frac{| b(z,y)|}{ \| z \|_X } \le C_2 \| y \|_Y\qquad \forall y\in Y.
  \end{equation}
\end{assumption}

\begin{assumption}
  \label{asm:Pi}
   There is a linear operator $\varPi: Y \to Y^r$ and a
  $C_\varPi>0$ such that for all $ {w_h}\in {X_h}$ and all $ v \in Y$,
  \[
    b(  w_h , v - {\varPi}  v) =0, \qquad \text{and} \qquad
    \| {\varPi}  v \|_Y \le C_{{\varPi}} \|  v \|_Y.
  \]
\end{assumption}

\begin{theorem}[see~\cite{GopalQiu13}]
  \label{thm:inex}
  Suppose Assumptions~\ref{asm:wellposed} and~\ref{asm:Pi} hold.  Then
  the DPG method~\eqref{eq:pdpg} is uniquely solvable for $x_h$ and
  \[
  \| x- x_h \|_X \le \frac{C_2C_\varPi}{C_1}
  \inf_{z_h\in X_h } \| x - z_h \|_X
  \]
  where $x$ is the unique exact solution of~\eqref{eq:weakform}.
\end{theorem}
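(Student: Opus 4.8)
The plan is to recognize~\eqref{eq:pdpg} as a Petrov--Galerkin scheme with trial space $X_h$ and test space $Y_h^r = T^r(X_h)$, and to invoke the \Babuska\ quasi-optimality theory. Two ingredients are needed: continuity of $b$ on $X\times Y$, which is exactly the right-hand inequality of~\eqref{eq:9} (so the continuity constant is $C_2$), and a discrete inf-sup condition on $X_h \times Y_h^r$. Once a discrete inf-sup constant $\gamma$ is in hand, unique solvability follows because the discrete system is square (we shall see $\dim Y_h^r = \dim X_h$) and the inf-sup forces injectivity, while the quasi-optimality constant will emerge as $C_2/\gamma$.

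The key step is to produce the discrete inf-sup with $\gamma = C_1/C_\varPi$, and this is where Assumption~\ref{asm:Pi} does the real work. First I would note that the left inequality of~\eqref{eq:9}, together with the trivial-kernel hypothesis in Assumption~\ref{asm:wellposed}, yields by the usual Hilbert-space duality the trial-side inf-sup
\[
\sup_{0\ne y\in Y}\frac{|b(w,y)|}{\|y\|_Y}\;\ge\; C_1\,\|w\|_X\qquad\forall\,w\in X,
\]
with the same constant $C_1$, since the inf-sup constants of a boundedly invertible $b$ and of its adjoint coincide. Next, for $w_h\in X_h$ and any $y\in Y$ the Fortin property of Assumption~\ref{asm:Pi} gives $b(w_h,y)=b(w_h,\varPi y)$ with $\varPi y\in Y^r$ and $\|\varPi y\|_Y\le C_\varPi\|y\|_Y$; dividing by $\|y\|_Y$ and taking the supremum converts the supremum over $Y$ into one over $Y^r$ at the cost of a factor $C_\varPi$, so that
\[
\|T^r w_h\|_Y=\sup_{0\ne \eta\in Y^r}\frac{|b(w_h,\eta)|}{\|\eta\|_Y}\;\ge\;\frac{C_1}{C_\varPi}\,\|w_h\|_X .
\]
Since the supremum defining $\|T^r w_h\|_Y$ is attained at $\eta=T^r w_h\in Y_h^r$, this is precisely the discrete inf-sup on $X_h\times Y_h^r$ with $\gamma=C_1/C_\varPi$. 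It also shows that $T^r$ is injective on $X_h$, whence $\dim Y_h^r=\dim X_h$ and the square discrete system is uniquely solvable.

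Finally, to obtain the sharp constant I would pass to the Petrov--Galerkin solution operator $P\colon X\to X_h$, where $Px\in X_h$ solves $b(Px,y_h)=b(x,y_h)$ for all $y_h\in Y_h^r$; by construction $x_h=Px$, the operator $P$ is idempotent with range $X_h$, and since $Pz_h=z_h$ we have $x-x_h=(I-P)(x-z_h)$ for every $z_h\in X_h$. The discrete inf-sup together with continuity bounds $\|P\|_{\mathcal L(X)}\le C_2C_\varPi/C_1$, and the Kato--Xu--Zikatanov identity $\|I-P\|=\|P\|$ for a nontrivial idempotent then gives $\|x-x_h\|_X\le (C_2C_\varPi/C_1)\,\|x-z_h\|_X$; taking the infimum over $z_h\in X_h$ completes the proof. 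The only genuinely DPG-specific obstacle is the middle step: transferring inf-sup stability from the infinite-dimensional test space $Y$ down to the computable finite-dimensional space $Y^r$. Everything hinges on the Fortin operator of Assumption~\ref{asm:Pi}, and it is exactly this transfer that makes the practical method~\eqref{eq:pdpg}, which can only ``see'' $Y^r$, as stable as the ideal method that uses all of $Y$.
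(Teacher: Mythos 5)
Your proof is correct, and it is essentially the argument of the cited source: the paper itself states Theorem~\ref{thm:inex} without proof, referring to~\cite{GopalQiu13}, where exactly this chain is used --- the operator $\varPi$ of Assumption~\ref{asm:Pi} acts as a Fortin operator to transfer the inf-sup condition of~\eqref{eq:9} from $Y$ to the discrete test space at the cost of a factor $C_\varPi$, and the sharp quasi-optimality constant $C_2C_\varPi/C_1$ (with no additive ``$1+$'') is obtained from the Xu--Zikatanov identity $\|I-P\|=\|P\|$ for the Petrov--Galerkin projector, just as you do. Your added observation that the supremum defining $\|T^r w_h\|_Y$ is attained at $T^r w_h \in Y_h^r$, so that stability holds over the method's actual test space $Y_h^r$ and not merely over $Y^r$, is the right way to handle that subtlety.
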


Another well-known result, motivated by~\cite{DahmeHuangSchwa11a}, is
an equivalence of the DPG method with a mixed Bubnov-Galerkin
formulation. To state it, we first define the error representation
function: let $\veps^r$ be the unique element of $Y^r$ satisfying
\begin{equation}
  \label{eq:epsr}
(\veps^r,y)_Y = \ell(y) - b(x_h,y),
 \forall y \in Y^r.
\end{equation}

\begin{theorem}
  \label{thm:pdggmixed}
  \Tfae\ statements:
  \begin{enumerate}[\quad i)\;]
  \item \label{item:dpgmixed-1} $x_h \in X_h$ solves the DPG
    method~\eqref{eq:pdpg}.
  \item \label{item:dpgmixed-2} $x_h\in X_h$ and $\veps^r\in Y^r$
    solve the mixed formulation
    \begin{subequations}
      \label{eq:mixeddpg}
      \begin{align}
        \label{eq:22}
        (\veps^r,y)_Y + b(x_h, y) & = \ell( y) &&\forall y\in Y^r, \\
        \label{eq:23}
        b(z_h,\veps^r)            & = 0      &&\forall z_h \in X_h.
      \end{align}
    \end{subequations}
  \end{enumerate}
\end{theorem}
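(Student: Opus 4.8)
The plan is to prove both implications by translating the side condition \eqref{eq:23} into a $Y$-orthogonality statement about $\veps^r$, after which both DPG equations can be read off directly. The single ingredient I would isolate first is the identity
\[
b(z_h, \veps^r) = (T^r z_h, \veps^r)_Y \qquad \forall z_h \in X_h,
\]
which is immediate from the defining relation $(T^r w, y)_Y = b(w,y)$ (valid for every $y \in Y^r$) upon choosing $w = z_h$ and $y = \veps^r \in Y^r$. Combined with the definition $Y_h^r = T^r(X_h)$, this identity shows that the set $\{T^r z_h : z_h \in X_h\}$ is precisely $Y_h^r$. Hence condition \eqref{eq:23}, namely $b(z_h, \veps^r) = 0$ for all $z_h \in X_h$, is equivalent to the statement that $\veps^r$ is $Y$-orthogonal to $Y_h^r$.

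For the implication \ref{item:dpgmixed-1} $\Rightarrow$ \ref{item:dpgmixed-2}, I would let $x_h$ solve \eqref{eq:pdpg} and take $\veps^r$ to be the error representation function of \eqref{eq:epsr}. Then \eqref{eq:22} is simply \eqref{eq:epsr} restated, so it holds by definition. To obtain \eqref{eq:23}, observe that for $y \in Y_h^r \subseteq Y^r$ the defining relation \eqref{eq:epsr} gives $(\veps^r, y)_Y = \ell(y) - b(x_h, y)$, and the right-hand side vanishes by the DPG equation \eqref{eq:pdpg}. Thus $\veps^r$ is $Y$-orthogonal to $Y_h^r$, which by the preceding paragraph is exactly \eqref{eq:23}.

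For the converse \ref{item:dpgmixed-2} $\Rightarrow$ \ref{item:dpgmixed-1}, I would start from a pair $(x_h, \veps^r)$ solving the mixed system \eqref{eq:mixeddpg}. By the first paragraph, \eqref{eq:23} forces $\veps^r$ to be $Y$-orthogonal to $Y_h^r$, so $(\veps^r, y)_Y = 0$ for all $y \in Y_h^r$. Restricting \eqref{eq:22} to test functions $y \in Y_h^r \subseteq Y^r$ then yields $b(x_h, y) = \ell(y)$ for all $y \in Y_h^r$, which is precisely \eqref{eq:pdpg}.

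I expect no genuine obstacle: the result is essentially a reorganization of the same linear relations, and the only points that require care are bookkeeping ones. One must track the conjugate-linear slot of both $(\cdot,\cdot)_Y$ and $b(\cdot,\cdot)$ so that the orthogonality conditions ``$(T^r z_h, \veps^r)_Y = 0$ for all $z_h \in X_h$'' and ``$(\veps^r, y)_Y = 0$ for all $y \in Y_h^r$'' are correctly interchanged, which they are since they agree up to complex conjugation. One should also note that the function $\veps^r$ appearing as a solution component of \eqref{eq:mixeddpg} is automatically the error representation function, because \eqref{eq:22} and \eqref{eq:epsr} are literally the same equation.
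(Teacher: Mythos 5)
Your proof is correct and is essentially the standard argument the paper alludes to when it omits the proof as ``simple'' (citing~\cite{Gopal13}): the key identity $b(z_h,\veps^r)=(T^r z_h,\veps^r)_Y$ turns \eqref{eq:23} into $Y$-orthogonality of $\veps^r$ to $Y_h^r=T^r(X_h)$, and both implications then follow by testing \eqref{eq:22} (equivalently \eqref{eq:epsr}) with $y\in Y_h^r$. Your bookkeeping of the conjugate-linear slots, needed since the forms are sesquilinear over $\CCC$, is also handled correctly.
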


\noindent Its simple proof is omitted (see e.g.~\cite{Gopal13}).
\begin{remark}
  \label{rem:controltesterror}
  The norm of $\veps^r$ is bounded by the error: Choosing $y=\veps^r$
  in~\eqref{eq:epsr}, we obtain
      \begin{align*}
        \|\veps^r\|^2_Y 
        & = (\veps^r,\veps^r)_Y  
          = \ell( \veps^r) - b(x_h, \veps^r)  
          = b(x -x_h, \veps^r). 
      \end{align*}
Hence, by Assumption~\ref{asm:wellposed}, 
\begin{equation}
  \label{eq:vepsbdd}
  \|\veps^r \|_Y\le C_2 \| x - x_h\|_X.
\end{equation}
This theme is further developed in~\cite{CarstDemkoGopal13}, where
$\|\veps^r\|_Y$ is established to be both a reliable and an efficient error
estimator.
\end{remark}

\subsection{Weakly conforming test space}

Let 
\begin{equation}
  \label{eq:30}
  Y_0^r = \{ y\in Y^r : \; \hb(\hw_h, y)=0,\;\; \forall \hw_h\,
  \in \hat X_h\}
\end{equation}
and let $T_0^r: X_0\to Y_0^r$ be defined by $(T_0^r w, y)_Y =
b_0(w,y)$ for all $y\in Y_0^r.$ In the examples we have in mind, $Y^r$
is a discontinuous Galerkin (DG) space, and $Y_0^r$ is a subspace with
weak interelement continuity constraints, i.e., a weakly conforming
space. In such cases, the application of the operator $T_0^r$ requires
a global inversion. We then compare these two DPG methods:
\begin{subequations}
\begin{align}
  \label{eq:26}
  \text{Find }(\xoh, \hx_h)\in X_h:
  &\quad
  b(\,(\xoh, \hx_h), y\,) = \ell(y) &&  \forall y\in Y_h^r \equiv T^r(X_h).
  \\
  \label{eq:27}
  \text{Find } \xoh\in \Xoh:
  &\quad
  b_0(\xoh, y) = \ell(y) &&  \forall y\in \Yohr \equiv T_0^r(\Xoh).
\end{align}
\end{subequations}
The first is the same as~\eqref{eq:pdpg}, the standard DPG method. We
view~\eqref{eq:26} as a ``hybridized'' form of the second
method~\eqref{eq:27}, and the next theorem shows in what sense they
are equivalent. The method~\eqref{eq:27} is not the preferred for
implementation due to the expense of applying $T_0^r$, but we will use
it later for error analysis.

\begin{theorem}
  \label{thm:pdpghybrid}
  The test spaces satisfy $\Yohr \subset Y_h^r$. Hence, if
  $(\xoh, \hx_h) \in X_h$ solves~\eqref{eq:26}, then $\xoh$
  solves~\eqref{eq:27}.
\end{theorem}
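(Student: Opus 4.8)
The plan is to establish the stated containment $\Yohr \subset Y_h^r$ of the test spaces first, since the second assertion follows from it almost immediately. The subtlety worth flagging at the outset is that $T_0^r$ and $T^r$ produce Riesz representatives with respect to \emph{different} subspaces ($Y_0^r$ versus $Y^r$), so membership of $T_0^r w$ in the image of $T^r$ is not a formality.

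First I would reinterpret the constraint defining $Y_0^r$ through $T^r$. For every $y \in Y^r$ and every $\hw_h \in \hX_h$, the definition of $T^r$ gives $\hb(\hw_h, y) = b((0,\hw_h), y) = (T^r(0,\hw_h), y)_Y$. Setting $W := T^r(\{0\}\times \hX_h) \subseteq Y^r$, this exhibits $Y_0^r$ as exactly the $Y$-orthogonal complement of $W$ inside $Y^r$; in particular $Y^r = W \oplus Y_0^r$ orthogonally. By linearity of $T^r$ I would also record the splitting $Y_h^r = T^r(X_h) = V_0 + W$, where $V_0 := T^r(\Xoh \times \{0\})$.

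The heart of the argument is to show that each generator $y_0 = T_0^r w$ of $\Yohr$, with $w \in \Xoh$, lies in $V_0 + W$. I would compare $y_0$ with $\tilde y := T^r(w,0) \in V_0$. By the defining relations, $(\tilde y, y)_Y = b_0(w,y)$ holds for all $y \in Y^r$, whereas $(y_0, y)_Y = b_0(w,y)$ holds only for $y \in Y_0^r$. Restricting the first identity to $y \in Y_0^r$ and subtracting shows that $\tilde y - y_0$ is $Y$-orthogonal to $Y_0^r$; since $\tilde y - y_0 \in Y^r$, the orthogonal decomposition $Y^r = W \oplus Y_0^r$ forces $\tilde y - y_0 \in W$. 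Hence $y_0 = \tilde y - (\tilde y - y_0) \in V_0 + W = Y_h^r$, which gives $\Yohr \subset Y_h^r$.

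For the final claim I would take an arbitrary $y \in \Yohr$ and use the containment twice over. Since $y \in Y_0^r$, the definition of $Y_0^r$ yields $\hb(\hx_h, y) = 0$; since $y \in Y_h^r$, equation~\eqref{eq:26} applies and gives $\ell(y) = b((\xoh,\hx_h),y) = b_0(\xoh,y) + \hb(\hx_h,y) = b_0(\xoh,y)$. As $y$ ranges over $\Yohr$, this is precisely~\eqref{eq:27}, so $\xoh$ solves the reduced method. The main obstacle is the containment step, specifically recognizing that the two Riesz maps differ only by an element of $W$; once the orthogonal splitting $Y^r = W \oplus Y_0^r$ is in hand, the remainder is bookkeeping.
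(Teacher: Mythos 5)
Your proof is correct, but it runs along a genuinely different orthogonal decomposition than the paper's. The paper splits $Y^r = Y_h^r \oplus Y_\perp^r$ with respect to the \emph{full} discrete optimal test space, decomposes a given $y_0 = T_0^r w_h \in \Yohr$ as $y_h + y_\perp$, and then kills the residual in two steps: first $y_\perp \in Y_0^r$ (since $\hb(\hw_h, y_\perp) = (T^r(0,\hw_h), y_\perp)_Y = 0$ by orthogonality to $T^r(X_h)$), and then $\|y_\perp\|_Y^2 = (y_0, y_\perp)_Y = b_0(w_h, y_\perp) = (T^r(w_h,0), y_\perp)_Y = 0$. You instead split $Y^r = W \oplus Y_0^r$ where $W = T^r(\{0\}\times \hX_h)$, having first observed that $Y_0^r$ is precisely the orthogonal complement of $W$ in $Y^r$, and you show that the two Riesz representatives $T_0^r w$ and $T^r(w,0)$ of the functional $b_0(w,\cdot)$ differ by an element of $W$. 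Both arguments rest on the same two identities $\hb(\hw, y) = (T^r(0,\hw), y)_Y$ and $b_0(w, y) = (T^r(w,0), y)_Y$, and both are legitimate since $Y^r$ is finite-dimensional, so the orthogonal splittings and the biorthogonality $(Y_0^r)^\perp \cap Y^r = W$ that you invoke are valid. What your route buys is a sharper structural statement: since $T^r(w,0) - T_0^r w \in W$ and $T_0^r w \in Y_0^r$, your argument shows that $T_0^r w$ is exactly the $Y$-orthogonal projection of $T^r(w,0)$ onto the weakly conforming space $Y_0^r$, which explains the containment $\Yohr \subset Y_h^r = V_0 + W$ constructively rather than by contradiction-style elimination of a residual. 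You are also more careful than the paper on the second assertion: the paper calls it ``obvious,'' while you correctly note that one needs both memberships $y \in Y_h^r$ (to invoke~\eqref{eq:26}) and $y \in Y_0^r$ (to conclude $\hb(\hx_h, y) = 0$ and drop the flux term), which is exactly the right bookkeeping.
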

\begin{proof}
  Let $Y_\perp^r$ be the $Y$-orthogonal complement of $Y_h^r$ in
  $Y^r$. Then we have the orthogonal decomposition
  \begin{equation}
    \label{eq:17}
    Y^r = Y_h^r + Y_\perp^r.
  \end{equation}
  Let $y_0\in \Yohr$. Apply~\eqref{eq:17} to decompose $y_0 =
  y_h + y_\perp$, with $y_h\in Y_h^r$ and $y_\perp \in Y_\perp^r$.

  First, we claim that $y_\perp\in Y_0^r$. This is because
  \begin{align*}
    \hb(\hw_h,y_\perp)=(T^r(0,\hw_h),y_\perp)_Y =0\qquad\forall \hw_h\in \hX_h.
  \end{align*}
  The last identity followed from the orthogonality of $y_\perp$ to
  $T^r(X_h)$.

  Next, we claim that $y_\perp=0$. It suffices to prove that
  $(y_0,y_\perp)_Y=0$ since $  (y_0,y_\perp)_Y=
  \| y_\perp\|_Y^2$. Since $y_0\in \Yohr$, there is a $w_h\in \Xoh$
  such that $y_0=T^r_0w_h$. Then, 
  \begin{align*}
    (y_0,y_\perp)_Y
    & = (T^r_0 w_h, y_\perp)_Y  = b_0(w_h,y_\perp)
    && \text{as }y_\perp \in Y_0^r
    \\
    &=(T^r(w_h,0),y_\perp)_Y =0 
    && \text{as }T^r(X_h)  \perp y_\perp.
  \end{align*}
  Finally, since $y_\perp=0$, we have $y_0 = y_h+0\in Y_h^r.$ Thus
  $\Yohr \subset Y_h^r$. The second statement of the theorem is now
  obvious by choosing $y\in \Yohr$ in~\eqref{eq:26}.
\end{proof}

\subsection{Injectivity}

Let $B_h:X_h \to (Y^r)^*$ be the operator generated by the form
$b(\cdot,\cdot)$, i.e., 
\[
(B_h w_h)(y)= b( w_h,y),
\qquad \forall  w_h \in X_h,\; y \in Y^r.
\]
Similarly, let $\hB_h: \hX_h \to (Y^r)^*$ be defined by 
\begin{equation}
  \label{eq:Bh}
(\hB_h \hz_h)(y)
= \hb( \hz_h,y),
\qquad
\forall  \hz_h \in \hX_h, \; y\in Y^r.  
\end{equation}
The injectivity of $B_h$ yields the unique solvability of the DPG method.

\begin{assumption}
  \label{asm:inj}
  Suppose
  \begin{enumerate}[\quad a)\;]
  \item \label{item:injA1} 
     $\Xoh \subseteq Y^r$, 
  \item \label{item:injA2}
    $\hb( \hz_h, z_0)=0$ for all $\hz_h \in \hX_h$ and 
    $z_0 \in \Xoh$, and 
  \item \label{item:injA3} any $z_0 \in \Xoh$ satisfying $
    b_0(z_0,z_0)=0$ must be zero.
  \end{enumerate}
\end{assumption}

\begin{theorem} \label{thm:inj} 
  If $B_h$ is injective, then $\hB_h$ is injective, and the DPG
  method~\eqref{eq:pdpg} is uniquely solvable.  Conversely, if $\hB_h$
  is injective, then $B_h$ is injective, provided
  Assumption~\ref{asm:inj} holds.
\end{theorem}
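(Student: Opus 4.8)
The plan is to exploit two structural facts: first, that $\hB_h$ is nothing but the restriction of $B_h$ to the interface slot, so that one implication follows immediately from the inclusion of $\hX_h$ into $X_h$ as $\{0\}\times\hX_h$; and second, that unique solvability of the DPG method~\eqref{eq:pdpg} is equivalent to injectivity of $T^r$ on $X_h$, which in turn coincides with injectivity of $B_h$.

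For the forward direction I would first record this solvability equivalence. Taking $y = T^r z_h$ for $z_h \in X_h$ and using the defining relation $(T^r x_h, y)_Y = b(x_h,y)$, the method~\eqref{eq:pdpg} is seen to be equivalent to the normal equations $(T^r x_h, T^r z_h)_Y = \ell(T^r z_h)$ for all $z_h \in X_h$. The associated sesquilinear form $(z_h, x_h)\mapsto (T^r z_h, T^r x_h)_Y$ on $X_h$ is Hermitian and positive semidefinite, hence positive definite --- and the square system uniquely solvable --- precisely when $T^r$ is injective on $X_h$. Since $T^r x_h = 0$ holds iff $b(x_h, y) = 0$ for all $y \in Y^r$, i.e.\ iff $B_h x_h = 0$, injectivity of $B_h$ delivers unique solvability at once. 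To deduce injectivity of $\hB_h$, I would embed $\hz_h \in \hX_h$ as $(0, \hz_h) \in X_h$ and note $b((0,\hz_h), y) = \hb(\hz_h, y)$, so that $\hB_h \hz_h = 0$ forces $B_h(0,\hz_h) = 0$; then $(0,\hz_h) = 0$ and $\hz_h = 0$.

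For the converse I would feed all three parts of Assumption~\ref{asm:inj} into a single computation. Suppose $B_h w_h = 0$ for $w_h = (w_0, \hw_h) \in X_h$, that is, $b_0(w_0, y) + \hb(\hw_h, y) = 0$ for every $y \in Y^r$. Part (a) legitimizes taking the test function $y = w_0 \in Y^r$; part (b) then annihilates the cross term $\hb(\hw_h, w_0)$, leaving $b_0(w_0, w_0) = 0$; and part (c) forces $w_0 = 0$. With $w_0 = 0$ the identity collapses to $\hb(\hw_h, y) = 0$ for all $y \in Y^r$, which is exactly $\hB_h \hw_h = 0$, so injectivity of $\hB_h$ gives $\hw_h = 0$ and hence $w_h = 0$.

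The argument is short and largely mechanical; the only genuinely delicate point is the converse, where the three hypotheses must interlock in the right order --- (a) is what permits testing against the interior unknown, (b) is what decouples the interface contribution, and (c) is what converts the possibly complex quantity $b_0(w_0,w_0)$ into the conclusion $w_0 = 0$. I would take care that the inclusion $\Xoh \subseteq Y^r$ of part (a) is used compatibly with the sesquilinearity conventions, so that $w_0$ genuinely occupies the test slot of both $b_0$ and $\hb$; this is the one place where a conjugation-convention slip could invalidate the chain.
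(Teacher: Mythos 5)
Your proposal is correct and follows essentially the same route as the paper: the forward direction rests on the same normal-equation reformulation $(T^r x_h, T^r z_h)_Y = \ell(T^r z_h)$ and the identification of $\ker T^r|_{X_h}$ with $\ker B_h$ (you phrase solvability via positive definiteness of the Gram form where the paper counts dimensions to get a square system, a cosmetic difference), and your converse --- testing $B_h(w_0,\hw)=0$ against $y=w_0$ using parts (a), (b), (c) of Assumption~\ref{asm:inj} and then invoking injectivity of $\hB_h$ --- is exactly the paper's argument. No gaps; your closing remark about the conjugation convention is a fair caution but harmless here since the paper applies the results with real-valued spaces and bilinear forms.
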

\begin{proof}
  Suppose $B_h$ is injective. The injectivity of $\hB_h$ is obvious
  from $\hB_h\hw_h = B_{h}(0,\hw_h)$.  We also claim that $T^r$ is
  injective: Indeed, if $w_h \in X_h$ satisfies $T^r w_h=0$, then
  $0=(T^r w_h, y)_Y = b(w_h, y)=(B_h w_h)(y)$ for all $y\in Y^r$, so
  $w_h =0$. The injectivity of $T^r$ implies that $\dim(Y_h^r) = \dim
  (X_h)$, so the DPG method~\eqref{eq:pdpg} yields a square
  system. Moreover, since~\eqref{eq:pdpg} is the same as
  \[
  (T^r x_h, T^r w_h)_Y = \ell( T^r w_h)\qquad \forall \, w_h \in X_h,
  \]
  the injectivity of $T^r$ also implies that there is a unique solution
  $x_h$ in $X_h$.
  
  Now suppose $\hB_h$ is injective. To prove that $B_h$ is injective,
  consider a $(w_0,\hw) \in X_h$ satisfying $B_h(w_0,\hat{w})=0$. Then
  \begin{align*}
    0
    & =(B_h(w_0,\hat{w}))(w_0)
    && \text{ by Assumption~\ref{asm:inj}(\ref{item:injA1})}
    \\
    & =  b\left((w_0,\hat{w}),w_0 \right)
    =b_0(w_0,w_0)+\hat{b}(\hat{w},w_0)
    \\
    & =b_0(w_0,w_0),
    && \text{ by Assumption~\ref{asm:inj}(\ref{item:injA2})}.
  \end{align*}
  Therefore, by Assumption~\ref{asm:inj}(\ref{item:injA3}),
  $w_0=0$. It only remains to show that $\hw=0$. But $(\hB_h \hw)(y) =
  \hb(\hw,y)= b(\,(0,\hw), y) = (B_h(w_0, \hw))(y) =0$ for all $y \in
  Y^r$. Hence the injectivity of $\hB_h$ implies $\hw=0$.
\end{proof}

\subsection{Duality argument for DPG}

By virtue of \autoref{thm:pdggmixed}, we may rewrite the DPG
method~\eqref{eq:pdpg} as follows: Find $\xoh \in X_{0,h}, \; \hx_h
\in \hat X_h,$ and $\veps^r \in Y^r$ solving
\begin{subequations}
\label{eq:I}
  \begin{alignat}{4}
    \label{eq:2}
    b_0(w,\veps^r)          && & & & = 0      
    & & \qquad\forall w \in X_{0,h}, 
    \\
    & & \hb( \hw, \veps^r) & & & =0
    & & \qquad \forall \hw \in \hX_h,
    \\
    \label{eq:1}
    b_0(\xoh, y)  \,& +\, & \hb(\hx_h,y)  \,& +  \, 
    &   (\veps^r,y)_Y & = \ell( y),
    & & \qquad\forall y\in Y^r.
  \end{alignat}
\end{subequations}
Defining 
\[
a( z,\hz ,v | w,\hw , y) = 
 \overline{b_0(w,v)}
+\overline{\hb( \hw, v)}
+ b_0(z, y) +
\hb(\hz,y)+(v,y)_Y,
\]
the mixed system~\eqref{eq:I} can then be rewritten as 
\[
a( \xoh,\hx_h
, \veps^r| w,\hw , y) = \ell(y), \qquad \forall 
 w\in X_{0,h}, \hw \in
\hX_h, y\in Y^r,
\] 
\hilit{where the complex conjugate  on the first two terms make the form $a$ sesquilinear.}  Now, observe that with $\veps=0$, the exact solution $( x_0, \hx ,
\veps) \in X_0 \times \hX \times Y$ satisfies the same equation for
all $ w\in X_0, \hw \in \hX, y \in Y$. Hence, we have a `Galerkin
orthogonality' relation
\begin{equation}
  \label{eq:6}
   a( x_0-\xoh,\hx - \hx_h , \veps-\veps^r | w,\hw , y) = 0,
\end{equation}
for all $ w\in X_{0,h}, \hw \in \hX_h, y\in Y^r.$ Note also that 
\begin{align*}
|a( z,\hz ,v | w,\hw , y) |
& \le 
C_2\| (z,\hz)\|_X\|y\|_Y + C_2 \| (w,\hw)\|_X\|v\|_Y + \| v\|_Y \|y\|_Y
\\
& 
\le 
\left(C_2^2 \| (z,\hz)\|_X^2 + 2\| v\|_Y^2 \right)^{1/2}
\left(C_2^2 \| (w,\hw)\|_X^2 + 2\| y\|_Y^2 \right)^{1/2}
\\
& \le \| a \|\, \|(z,\hz,v)\|_{X_0 \times \hX \times Y}
     \|(w,\hw,y)\|_{X_0 \times \hX \times Y}
\end{align*}
where $\| a\|$ is a constant not larger than $\max(C_2^2,2)$. Under the
following assumption, we can extend the Aubin-Nitsche
technique~\cite{Nitsc68} to DPG methods, as seen in the next theorem.

\begin{assumption}  \label{asm:dual}
Suppose $L$ and $Z$ are Hilbert spaces such that the embeddings
$ Z \subseteq X_0 \times \hX \times Y$ and $X_0 \subseteq L$ are
continuous. 
Assume that there is a $C_3(h)>0$ such that
for any $g\in L$, there is a $U(g) \in Z$ satisfying
\begin{equation}
  \label{eq:5}
a(  w,\hw , y | U(g) ) = (w,g)_L  
\end{equation}
for all $(w,\hw , y) \in X_0 \times \hat X \times Y$ and 
\begin{equation}
  \label{eq:3}
  \inf_{ W \in X_{0,h} \times \hX_h \times Y^r} 
  \| U(g) - W \|_{ X_0 \times \hat X \times Y} \le C_3(h) \| g \|_L.
\end{equation}  
\end{assumption}

\begin{theorem}  \label{thm:duality}
Suppose  Assumption~\ref{asm:dual} holds. Then,
  \[
  \|x - \xoh \|_L \le C_3(h)
  \|a\| \| (x,\hx, \veps) - (\xoh,\hx_h, \veps^r) \|_{X_0 \times \hat X\times Y}.
  \]
\end{theorem}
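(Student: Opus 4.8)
The plan is to run the Aubin--Nitsche duality argument directly in the product space $X_0 \times \hat X \times Y$, exploiting the Galerkin orthogonality~\eqref{eq:6} together with the continuity bound for $a$ derived immediately above the theorem. Throughout I would abbreviate the full error by $e = (x,\hx,\veps) - (\xoh,\hx_h,\veps^r)$, a triple that lies in $X_0 \times \hat X \times Y$ because each slot is the difference of an exact and a discrete quantity (recall $\veps=0$). Since the $L$-norm is meaningful only on the interior component, via the embedding $X_0 \subseteq L$, the object to estimate is the interior error, and I would start from the Riesz characterization
\[
\|x - \xoh\|_L = \sup_{0 \ne g \in L} \frac{|(x - \xoh, g)_L|}{\|g\|_L}.
\]

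Fix $g \in L$ and let $U(g) \in Z$ be the dual solution guaranteed by Assumption~\ref{asm:dual}, so that~\eqref{eq:5} holds for every $(w,\hw,y) \in X_0 \times \hat X \times Y$. The decisive move is to test~\eqref{eq:5} with the error itself, that is, to set $(w,\hw,y)=e$; this is legitimate precisely because the first argument of $a$ in~\eqref{eq:5} ranges over the full space, and it produces
\[
a(\,e \,|\, U(g)\,) = (x - \xoh, g)_L .
\]
Next I would insert Galerkin orthogonality. For any $W \in \Xoh \times \hX_h \times Y^r$, relation~\eqref{eq:6} gives $a(\,e \,|\, W\,)=0$, so the second argument may be shifted from $U(g)$ to $U(g)-W$ at no cost, yielding
\[
(x - \xoh, g)_L = a(\,e \,|\, U(g) - W\,) .
\]

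Applying the continuity estimate $|a(\cdot\,|\,\cdot)| \le \|a\|\,\|\cdot\|\,\|\cdot\|$ established just before the theorem, I obtain
\[
|(x - \xoh, g)_L| \le \|a\|\, \|e\|_{X_0 \times \hat X \times Y}\, \|U(g) - W\|_{X_0 \times \hat X \times Y}
\]
for every $W$ in the discrete product space. Taking the infimum over $W$ and invoking the approximation bound~\eqref{eq:3} controls the last factor by $C_3(h)\|g\|_L$; dividing by $\|g\|_L$ and passing to the supremum over $g$ then delivers the asserted estimate.

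I do not expect a serious obstacle, as the computation is routine once the bookkeeping is fixed; the one point requiring genuine care is the asymmetry between the two orthogonality relations. The primal orthogonality~\eqref{eq:6} holds when the \emph{second} slot of the sesquilinear form $a$ lies in the discrete space, whereas the dual problem~\eqref{eq:5} is tested in the \emph{first} slot over the full space. Keeping the roles of trial and test swapped correctly — so that $e$ occupies the first slot and $U(g)-W$ the second — is the crux of the argument, and it is exactly this swap that makes the Aubin--Nitsche mechanism operate in the DPG setting.
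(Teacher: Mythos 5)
Your proposal is correct and is essentially the paper's own argument: test the dual problem~\eqref{eq:5} with the error in the first slot, shift $U(g)$ to $U(g)-W$ by Galerkin orthogonality~\eqref{eq:6}, apply the continuity bound for $a$, and conclude with~\eqref{eq:3}. The only (cosmetic) difference is that the paper simply takes $g = x-\xoh$, so that $\|x-\xoh\|_L^2$ appears directly and one factor cancels, whereas you route the same computation through the Riesz characterization $\|x-\xoh\|_L=\sup_{0\neq g\in L}|(x-\xoh,g)_L|/\|g\|_L$.
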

\begin{proof}
  Setting $g=w=x-\xoh$, $\hw =
  \hx - \hx_h$, and $y=\veps-\veps^r$  in~\eqref{eq:5}, 
  \begin{align*}
    \| x -\xoh \|_L^2 
    & = a(x-\xoh, \hx-\hx_h, \veps-\veps^r | U(x-\xoh))
    \\
    & = a(x-\xoh, \hx-\hx_h, \veps-\veps^r | U(x-\xoh) - W), 
    \qquad \text{by~\eqref{eq:6},}
    \\
    & \le  \|a\| \| (x-\xoh, \hx-\hx_h, 
    \veps-\veps^r) \|_{X_0 \times \hat X \times Y}
    \| U(x-\xoh)- W\|_{X_0 \times \hat X \times Y}
  \end{align*}
  for any $W \in X_{0,h} \times \hX_h \times Y^r$. Hence~\eqref{eq:3}
  completes the proof.
\end{proof}

\begin{remark}
  \label{rem:unisoldual}
  Let $A : X_0 \times \hX \times Y \to (X_0 \times \hX \times Y)^*$ be
  the operator generated by $a(\cdot,\cdot)$, i.e.,
  $(A(z,\hz,v))(w,\hw,y) = a( z,\hz,v| w,\hw,y)$ for all $(z,\hz,v),
  (w,\hw,y) \in X_0 \times \hX \times Y.$ If
  Assumption~\ref{asm:wellposed} holds, then $A$ is a bijection. (This
  follows from the \Babuska-Brezzi theory~\cite{BrezzForti91}, applied
  to the mixed system~\eqref{eq:mixeddpg}: the ``inf-sup
  condition'' follows from~\eqref{eq:9}, and the ``coercivity in the
  kernel condition'' is trivial.) Hence, the dual operator of $A$ is
  also a bijection whereby we conclude that~\eqref{eq:5} has a unique
  solution $U(g)$.
\end{remark}

\begin{remark}
  All results of this section hold for spaces over the real field
  $\RRR$ -- one only needs to replace $\CCC$ by $\RRR$, sesquilinear
  by bilinear, and conjugate-linear by linear to obtain the
  corresponding statements for real valued function spaces.
  \hilit{The DPG method for the Helmholtz
    equation~\cite{GopalMugaOliva14} provides an example where
    sesquilinear forms over $\CCC$ are used. For simplicity, in the
    remaining sections we will restrict ourselves to real-valued
    functions.}
\end{remark}

\section{Application to the Laplace equation} \label{sec:laplace}

Suppose $\om$ is a bounded open polygon in $\mathbb{R}^{2}$ with
Lipschitz boundary, meshed by $\oh$, a geometrically
conforming shape regular finite element mesh of triangles. Let $h =
\max_{K\in\oh}\diam K$. Let $\d\oh$ denote the collection of all
element boundaries $\partial K$ for all elements $K$ in $\oh$. 
We now study the DPG
approximation to the Dirichlet problem
\begin{subequations}
  \label{eq:bvp}
  \begin{align}
  -\Delta u   & = f && \text{ on } \om, \\
  u & =0 && \text{ on } \d \om.
  \end{align}
\end{subequations}
All functions are real-valued in this section.

Omitting a detailed derivation of the method, which can be found
in~\cite{BroerSteve12,DemkoGopal13a}, we simply specify how the method
can be obtained by setting these within the general framework of
\autoref{sec:general}:
\begin{align*}
  X_0 & = H_0^1(\om), \qquad \hX = H^{-1/2}(\d\oh), 
  \\
  Y  & = H^1(\oh), \qquad \text{ where } 
  \\
  H^1(\oh)
  & = \{ v: \; v|_K \in H^1(K), \;\forall K \in \oh\},
  \\
  H^{-1/2}(\d \om_h)
  & = \{ \eta \in {\prod_K} H^{-1/2}(\d K)
  :\; \exists \,r
  \in \Hdiv\om \text{ such that } 
  \\
  & \qquad \eta|_{\d K} = r\cdot n|_{\d K}, 
  \quad \forall\, K \in \oh \},
\end{align*}
where $n$ denotes the unit outward normals on the boundary of mesh
elements.  The space $H^{-1/2}(\d \oh)$ is normed, as
in~\cite{RaviaThoma77a}, by
\begin{align}
  \label{eq:qnorm}
  \|  \hr_n \|_{H^{-1/2}(\d\oh)} 
  & =  \inf
      \big\{ \| r \|_{\Hdiv\om}:  \; r \in \Hdiv\om
      \text{ such that } \hr_n |_{\d K} = r\cdot n|_{\d K} \;\forall\,
      K\in \oh \big\}.
\end{align} 
The ``broken'' Sobolev space $H^1(\oh)$ is normed by
\begin{align}
\label{eq:vnorm}
\| v \|_{H^1(\oh)}^2
& = (v,v)_\oh + (\grad v,\grad v)_\oh.
\end{align} 
Throughout \hilit{the rest of the paper,} the derivatives are always calculated element by element,
and
\begin{align*}
  (r,s)_\oh =  \sum_{K\in\oh} (r,s)_K,
  \qquad
  \ip{\ell,w}_{\d\oh} = \sum_{K\in\oh} \ip{\ell,w}_{1/2,\d K},
\end{align*}
where $(\cdot,\cdot)_K$ denotes the $L^2(K)$-inner product and $\ip{
  \ell,\cdot}_{1/2,\d K}$ denotes the action of a functional $\ell$ in
$H^{-1/2} (\d K)$. The bilinear and linear forms of the weak
formulation are set by
\begin{align*}
  b_0(w,y) & = (\grad w, \grad y)_\oh,
  &
  \hb(\hr_n,y) & = -\ip{ \hr_n, y}_{\d\oh},
  & 
  \ell(y) 
  & = (f,y)_\om.
\end{align*}
Assumption~\ref{asm:wellposed} was verified for this formulation
in~\cite{DemkoGopal13a}. We will denote the exact solution of the
resulting weak formulation~\eqref{eq:weakform} by $(u,\hq_n) \in
X$. Note that $\hq_n|_{\d K} = \d_n u|_{\d K}$ for all $K\in \oh$.

To complete the specification of the method, it only remains to
set the discrete spaces. Let $P_k(D)$ denote the set of
polynomials of degree at most $k$ on the domain $D$ (with the
understanding that the set is trivial when $k<0$). Let $P_k(\oh) = \{ v:
v|_K \in P_k(K)$ for all $K\in \oh\}$ and let $P_k(\d\oh)$ denote the
set of functions $v$ on $\d\oh$ having the property $v|_E \in P_k(E)$
for all edges of $\d K$ and for all $K\in \oh$. Then, recalling the
three cases mentioned in \autoref{sec:introduction}, we set, for any integer
$k\ge 1$, 
\begin{alignat*}{3}
  &\text{Case~1}                       &&    \text{Case~2}                  && \text{Case~3}\\
   \Xoh & = P_k(\oh) \cap X_0          & \Xoh & = P_{k-1}(\oh) \cap X_0       & \Xoh & = P_k(\oh) \cap X_0, \\        
   \hX_h& = P_{k-1}(\d\oh)\cap\hX\qquad & \hX_h& = P_{k-1}(\d\oh)\cap\hX\qquad& \hX_h& = P_{k-1}(\d\oh) \cap \hX, \\   
   Y^r  & = P_{k+1}(\oh)                &  Y^r & = P_{k}(\oh)               & Y^r & = P_{k}(\oh).                         
\end{alignat*}
The discrete solution in each of these cases is denoted by $(u_h,
\hqh) \in X_h$.  We now proceed to study these cases and explain the
observations in \autoref{tab:rates}.

\subsection{Case~1: Application of the duality argument} \label{ssec:case1}

For Case~1, Assumption~\ref{asm:Pi} was verified
in~\cite{DemkoGopal13a}.  This then led to
\cite[Theorem~4.1]{DemkoGopal13a}, which states that
\[
  \| u - u_h \|_{H^1(\om)} + \| \hq_n - \hqh \|_{H^{-1/2}(\d\oh)}
  \le 
  C 
  \inf_{ (w_h,\hrh) \in X_h }
  \big(
  \| u - w_h \|_{H^1(\om)} + \| \hq_n - \hrh \|_{H^{-1/2}(\d\oh)}
  \big).
\]
Here and henceforth, $C$ denotes a generic constant independent of the
size of the triangles in $\oh$ (but dependent on mesh shape
regularity), whose value at different occurrences may vary.
As explained in previous papers (see e.g., \cite{DemkoGopal11a}), 
applications of the Bramble-Hilbert Lemma in the Lagrange and
Raviart-Thomas spaces show that
\begin{subequations}
  \label{eq:approxest}
\begin{align}
  \inf_{ w_h \in P_l(\oh) \cap X_0}
  \| u - w_h \|_{H^1(\om)} 
  &  \le C h^l |u |_{H^{l+1}(\om)},  && \forall l \ge 0,
  \\
  \inf_{ \hrh \in P_{m-1}(\d\oh) \cap \hX}
  \| \hq_n - \hrh \|_{H^{-1/2}(\d\oh)}
  & \le C h^m \left( | u |_{H^{m+1}(\om)} + | f |_{H^m(\om)} \right), 
  &&\forall m\ge 1.
\end{align}
\end{subequations}
Therefore, 
\begin{equation}
  \label{eq:energyest}
  \| u - u_h \|_{H^1(\om)} + \| \hq_n - \hqh \|_{H^{-1/2}(\d\oh)}
  \le 
  C h^k  \left( | u |_{H^{k+1}(\om)} + | f |_{H^k(\om)} \right).
\end{equation}
Hence the $O(h^k)$ convergence of $\| u - u_h \|_{H^1(\om)}$ (first
entry of \autoref{tab:rates}) is completely explained. To explain the
$O(h^{k+1})$ convergence of $\| u - u_h \|_{L^2(\om)}$, we apply the
duality argument of \autoref{thm:duality}. Its hypothesis is verified
in the next proof.

\begin{theorem}
  \label{thm:dual}
  Suppose $\om$ is convex. Then, for Case~1, 
  \[
  \| u - u_h \|_{L^2(\om)} \le 
  C h^{k+1} \left( | u |_{H^{k+1}(\om)} + | f |_{H^k(\om)} \right).
  \]
\end{theorem}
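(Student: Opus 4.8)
The plan is to apply the abstract duality theorem (\autoref{thm:duality}) with the space $L = L^2(\om)$, choosing the target functional $g \in L^2(\om)$ and constructing the dual solution $U(g)$ from the solution of a suitable adjoint boundary value problem. Concretely, since $a(\cdot,\cdot)$ is built from the primal Laplace form $b_0(w,y)=(\grad w,\grad y)_\oh$, the form $\hb(\hr_n,y)=-\ip{\hr_n,y}_{\d\oh}$, and the inner product $(v,y)_Y$, the equation~\eqref{eq:5} defining $U(g)=(z_g,\hz_g,v_g)$ decouples into recognizable pieces. Testing with $(w,0,0)$ gives a variational identity forcing $v_g$ and $z_g$ to be related to the solution $\phi$ of the dual problem $-\Delta \phi = g$ in $\om$, $\phi = 0$ on $\d\om$; testing with $(0,\hw,0)$ and $(0,0,y)$ pins down the flux component $\hz_g$ and the error-representation component $v_g$. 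The upshot is that $U(g)$ is entirely determined by $\phi$ and its fluxes, so its smoothness is governed by elliptic regularity for the dual Dirichlet problem.

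First I would solve~\eqref{eq:5} explicitly. I expect the natural choice $v_g = 0$ (so the $Y$-component of the dual solution vanishes), $z_g = \phi$, and $\hz_g = \d_n\phi$, mirroring the structure of the exact primal solution $(u,\hq_n,0)$. I would verify that this triple satisfies $a(w,\hw,y \mid z_g,\hz_g,v_g) = (w,g)_{L^2(\om)}$ for all $(w,\hw,y)$: the $b_0(w,z_g)=(\grad w,\grad\phi)_\oh$ term integrates by parts to $(w,g)_\om$ using $-\Delta\phi=g$ and the boundary flux matching, the $\hb(\hw,z_g)$ and $\hb(\hz_g,y)$ terms cancel or vanish by the compatibility of $\phi$ and $\d_n\phi$, and $(v_g,y)_Y=0$. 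This identifies $Z$ as the subspace of $X_0\times\hX\times Y$ containing such dual solutions with $U(g)=(\phi,\d_n\phi,0)$.

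Next I would establish the approximation estimate~\eqref{eq:3}, i.e.\ $\inf_W \|U(g)-W\|_{X_0\times\hX\times Y}\le C_3(h)\|g\|_{L^2(\om)}$ with $C_3(h)=Ch$. Since $v_g=0$, the infimum over the $Y^r$-component contributes nothing, and the remaining two components are approximated by the same Lagrange and Raviart-Thomas interpolants used for~\eqref{eq:approxest}. Here convexity of $\om$ is essential: it yields the $H^2$-elliptic regularity estimate $\|\phi\|_{H^2(\om)}\le C\|g\|_{L^2(\om)}$, so that the Bramble-Hilbert estimates~\eqref{eq:approxest} (at the lowest levels $l=1$, $m=1$) give $\|U(g)-W\|\le C h\,\|\phi\|_{H^2(\om)}\le C h\,\|g\|_{L^2(\om)}$. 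Thus $C_3(h)=Ch$.

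Finally I would combine these ingredients: \autoref{thm:duality} gives
\[
\|u-u_h\|_{L^2(\om)} \le C h\,\|a\|\,\|(u,\hq_n,0)-(u_h,\hqh,\veps^r)\|_{X_0\times\hX\times Y},
\]
and the energy-norm estimate~\eqref{eq:energyest} together with the control of the error-representation function~\eqref{eq:vepsbdd} bounds the right-hand factor by $C h^k(|u|_{H^{k+1}(\om)}+|f|_{H^k(\om)})$, yielding the claimed $O(h^{k+1})$ rate. The main obstacle I anticipate is the precise verification that the candidate $U(g)=(\phi,\d_n\phi,0)$ actually solves~\eqref{eq:5}, since this requires carefully tracking the integration-by-parts identities and the sign/conjugation conventions in the definition of $a$, and confirming that $\d_n\phi$ lies in $\hX=H^{-1/2}(\d\oh)$ with norm controlled by $\|\phi\|_{H^2(\om)}$; everything else reduces to standard elliptic regularity and interpolation estimates already invoked for Case~1.
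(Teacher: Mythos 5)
There is a genuine gap, and it sits exactly where you anticipated trouble: your candidate $U(g)=(\phi,\d_n\phi,0)$ does not solve \eqref{eq:5}. In the form $a(z,\hz,v\,|\,w,\hw,y)$ the interior variable of each triple is paired with the \emph{$Y$-component of the other triple}, so in $a(w,\hw,y\,|\,z_g,\hz_g,v_g)$ the only term containing $w$ is $b_0(w,v_g)$. If you set the $Y$-component $v_g=0$, the left-hand side of \eqref{eq:5} is independent of $w$ and cannot equal $(w,g)_{L^2(\om)}$ for $g\ne 0$. The $y$-block fails as well: with $z_g=\phi$ and $\hz_g=\d_n\phi$, elementwise integration by parts gives $(\grad \phi,\grad y)_\oh-\ip{\d_n\phi,y}_{\d\oh}=-(\Delta\phi,y)_\oh=(g,y)_\oh\ne 0$, whereas \eqref{eq:5} requires this block to vanish. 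The point you missed is that the adjoint of the mixed DPG system \eqref{eq:I} transposes the roles of the trial variables and the error-representation variable, so the dual solution does \emph{not} mirror the primal structure $(u,\hq_n,0)$: the load $g$ must be carried by the $Y$-component. The paper shows that $U(g)=(z,\hz_n,d)$ where $d\in H_0^1(\om)$ solves $-\Delta d=g$, $z\in H_0^1(\om)$ solves $\Delta z=d+g$, and $\hz_n=n\cdot\grad(d+z)$ on each $\d K$.

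This correction changes the approximation step, which is not as cheap as you claim. Since $d\ne 0$, the infimum over the $Y^r$-component does contribute: one must approximate $d$ in $H^1(\oh)$ by $P_{k+1}(\oh)$, and the flux component is $\hz_n=n\cdot\grad(d+z)$ rather than $\d_n\phi$, so its $H^{-1/2}(\d\oh)$-norm is estimated via the Raviart--Thomas interpolant, with the divergence part handled through the commutativity property with the $L^2$-projection and the identity $\Delta(d+z)=d$ from \eqref{eq:7}. Convexity then gives the two elliptic regularity bounds $\|d\|_{H^2(\om)}+\|z\|_{H^2(\om)}\le C\|g\|_{L^2(\om)}$ (not just one, as in your sketch), yielding $C_3(h)=Ch$. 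Your final assembly---\autoref{thm:duality} combined with \eqref{eq:energyest} and the bound \eqref{eq:vepsbdd} on $\veps^r$---matches the paper and is correct; it is the identification of the dual solution, and hence the entire middle of the argument, that must be repaired as above.
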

\begin{proof}
  Set 
\begin{align*}
  Z_1 &= H^2(\om) \cap X_0, & L & = L^2(\om),\\
  Z_2 &= H^2(\om) \cap Y,   &  Z & =Z_1 \times \hX \times Z_2.
\end{align*}
To verify Assumption~\ref{asm:dual}, let $g\in L$. By
Remark~\ref{rem:unisoldual}, there is a unique $U(g) \equiv (z,\hz_n,d)
\in X_0 \times \hX \times Y$ solving~\eqref{eq:5}. Writing
out~\eqref{eq:5} in component form,
\begin{subequations}
\label{eq:dual-laplace}
  \begin{alignat}{4}
    \label{eq:dual-laplace-1}
    (d,y)_Y  \, +  \, & (\grad z, \grad y)_\oh  \,-\, & \ip{\hz_n,y}_{\d\oh} 
    & = 0,
    &&\qquad\forall y\in Y, 
    \\
    \label{eq:dual-laplace-2}
    & (\grad d,\grad w)_\oh            & & = (g, w)_\oh      
    &&\qquad\forall w \in X_0,
    \\
    \label{eq:dual-laplace-3}
    &&\ip{ \hw_n, d}_{\d\oh} & =0
    &&\qquad \forall \hw_n \in \hX.
  \end{alignat}
\end{subequations}

We need to understand the regularity of solutions
of~\eqref{eq:dual-laplace}. Considering the  $d$ component first,
we claim that~\eqref{eq:dual-laplace-3}
implies $d \in H_0^1(\om)$: Indeed the distributional gradient
$\grad d$ acting on a test function $\phi \in \DD(\om)^2$
satisfies $ (\grad d)(\phi) = -(d, \dive \phi)_\oh = (\grad d,
\phi)_\oh - \ip{ d,\phi\cdot n}_{\d\oh} $ and the last term vanishes
by~\eqref{eq:dual-laplace-3}, so the distributional gradient is in
$L^2(\om)^2$. It is also easy to see that the trace of $d$ vanishes on
$\d\om$. Then, \eqref{eq:dual-laplace-2} implies that $-\Delta d = g$.
Next, consider $z \in H_0^1(\om)$. Equation~\eqref{eq:dual-laplace-1}
with $y\in H_0^1(\om)$ yields $ (\grad z, \grad y) = -(d,y)_\oh-(\grad
d, \grad y)_\oh = -(d,y)_\oh + (\Delta d, y)_\oh $ which implies
$\Delta z = d+g$. Finally, using the equations for $z$ and $d$ in
\eqref{eq:dual-laplace-1} and integrating by parts, we find $
\ip{\hz_n, y}_{\d\oh} = \ip{n\cdot \grad (d+z), y}_{\d\oh}.$
Summarizing, the classical form of~\eqref{eq:dual-laplace} is
\begin{subequations}
  \label{eq:7}
\begin{align}
  -\Delta d & = g,  && \text{ on } \om, 
  \\
  d & = 0,  && \text{ on } \d\om, 
  \\
  \Delta z & = d+g,   && \text{ on } \om, 
  \\
  z & = 0, && \text{ on } \d\om, 
  \\
  \hz_n & = n \cdot \grad(d+z),   && \text{ on } \d K, \; \forall K \in \oh.
\end{align}
\end{subequations}

Thus, by full regularity of the Dirichlet problem on a convex
domain~\cite{Grisv85}, $d$ and $z$ are in $H^2(\om)$, and moreover,
\begin{align*}
  \| d \|_{Z_2} & \le  C\| g \|_L, \\
  \| z \|_{Z_1} & \le C \left( \| d \|_L + \| g\|_L\right)\le  C\| g \|_L, \\
  \| \hz_n \|_{\hX} & \le \| \grad (d+z) \|_{\Hdiv\om} 
	\\
	& =\| \grad (d+z) \|_{L}+\|\Delta(d+z)\|_{L}
	\\
	& =\| \grad (d+z) \|_{L}+\|d\|_{L} \qquad \text{by~\eqref{eq:7},} 
	\\
	& \le C \|g \|_L.
\end{align*}
Hence
\begin{equation}
  \label{eq:4}
  \|(z,\hz,d)\|_Z \le C  \| g \|_L.
\end{equation}

To complete the verification of Assumption~\ref{asm:dual}, we now only
need to bound some approximation errors. By the Bramble-Hilbert lemma, 
\begin{align}
  \nonumber
  \lefteqn{\hspace{-.5cm}\inf_{ W \in X_{0,h} \times \hX_h\times Y^r }
  \| U(g) - W \|^2_{X_{0} \times \hX\times Y}}
 \\\label{eq:3terms}
  & =  
  \inf_{w_h \in P_k(\oh) \cap X_0} 
  \| z - w_h \|_{H^1(\om)}^2 +
  \inf_{v_h \in P_{k+1}(\d\oh)}
  \| d- v_h\|_{H^1(\oh)}^2  
  + 
  \inf_{\hw_h \in P_{k-1}(\d\oh)\cap \hX}
  \| \hz_n - \hw_h \|_{\hX}^2
  \\ \nonumber 
  & \le  C h^2\left(  | d |_{H^2(\om)}^2 + |z |_{H^2(\om)}^2 \right)
  +
  \inf_{r_h \in R_{k-1}}\| \grad(d+z) - r_h \|_{\Hdiv\om}^2
\end{align}
where $R_{k-1}$ is the Raviart-Thomas subspace~\cite{RaviaThoma77a} of
$\Hdiv\om$ consisting of all vector functions which when restricted to
an element takes the form $x p_1 + p_2$ for some $p_1 \in P_{k-1}(K)$ and
some $p_2 \in P_{k-1}(K)^2$. Let $\pirt$ denote the Raviart-Thomas
projection into $R_{k-1}$. By its well-known commutativity property
with the $L^2$-projection~$\prj$ onto $P_{k-1}(\oh)$, we have 
\begin{align*}
  \inf_{r_h \in R_{k-1}}\| \grad(d+z) - r_h \|_{\Hdiv\om}
& \le 
\| (I-\pirt)\grad(d+z)\|_{\Hdiv\om}
\\
& \le \| (I-\pirt)\grad(d+z)\|_L + 
\| (I- \prj)\Delta(d+z)\|_L
\\
& \le \| (I-\pirt)\Grad(d+z)\|_L + 
\| (I- \prj) d\|_L, \qquad \text{by~\eqref{eq:7},}
\\
& \le C h | d + z |_{H^2(\om)} + C h |d |_{H^1(\om)},
\end{align*}
where we used the Bramble-Hilbert lemma again in the final step.
Hence using the regularity estimate~\eqref{eq:4},
\[
    \inf_{ W \in X_{0,h} \times \hX_h \times Y^r} \| U(g) - W \|_{ X_0 \times \hat X \times Y}
  \le C h \| g \|_L,
\]
thus verifying Assumption~\ref{asm:dual}. Now, applying
Theorem~\ref{thm:duality}, 
\[
\| u - u_h \|_{L^2(\om)}
\le C h 
\left( 
  \| u - u_h \|_{H^1(\om)} + \| \hq_n - \hqh \|_{H^{-1/2}(\d\oh)}
  + \| \veps - \veps^r \|_{H^1(\oh)}
\right)
\]
where $\veps=0$ and $\veps^r$ is as in~\eqref{eq:epsr}. This implies,
by virtue of~\eqref{eq:vepsbdd} in Remark~\ref{rem:controltesterror},
\[
\| u - u_h \|_{L^2(\om)}
\le C h 
\left( 
  \| u - u_h \|_{H^1(\om)} + \| \hq_n - \hqh \|_{H^{-1/2}(\d\oh)}
\right)
\] 
so the proof is finished using~\eqref{eq:energyest}.
\end{proof}

\subsection{Case~2: Explaining the even-odd separation} \label{ssec:case2}

This case was not studied in previous works. We must first check if
the DPG system is solvable for this case. For this, \autoref{thm:inj}
is useful. Clearly, Assumption~\ref{asm:inj} holds -- in fact, it holds
for all the three cases: items (\ref{item:injA1}) and (\ref{item:injA2}) are
obvious, while (\ref{item:injA3}) follows by the \Poincare\
inequality. Hence, applying \autoref{thm:inj}, we conclude that the
DPG method in Case~2 is uniquely solvable if and only if $\hB_h$ is
injective.

\begin{example}\label{ex:example}
  We begin with a negative result showing that $\hB_h$ is not
  injective when $k=2$.  On a mesh consisting of a single element in
  the $xy$-plane, namely the unit triangle with vertices $a_0=(0,0),
  a_1=(1,0)$ and $a_2=(0,1),$ we choose a basis for $\hX_h$: Letting
  $e_i$ denote the edge opposite to $a_i$ and $1_{e_i}$ denote the
  indicator function of $e_i$, the basis is $(1_{e_2}, x|_{e_2},
  1_{e_1}, y|_{e_1}, 1_{e_0}/\sqrt 2, x|_{e_0} /\sqrt 2)$. For the
  trial space $Y^r$, we choose the polynomial basis ($1$, $x$, $y$,
  $x^2$, $xy$, $y^2$).  The stiffness matrix of the operator $\hB_h$
  with respect to these bases is
\[
\begin{pmatrix}
 1 & 1/2 & 1 & 1/2 & 1 & 1/2  \\
 1/2 & 1/3 & 0 & 0 & 1/2 & 1/3  \\
 0 & 0 & 1/2 & 1/3 & 1/2 & 1/6  \\
 1/3 & 1/4 & 0 & 0 & 1/3 & 1/4  \\
 0 & 0 & 0 & 0 & 1/6 & 1/12  \\
 0 & 0 & 1/3 & 1/4 & 1/3 & 1/12  
\end{pmatrix},
\]
whose determinant is zero. Hence, by theorem~\autoref{thm:inj} the DPG
method is not uniquely solvable in this example. 

\hilit{This example is closely related to a well-known
  result~\cite{NME:NME1620190405} that there is a nonzero quadratic
  function that is zero on the two Gauss-Legendre points (required for
  an exact integration of a third order polynomial) on each edge of a
  triangle. Clearly, such a quadratic function is  
  orthogonal to all functions that are linear on each edge of
  the triangle.  }
\end{example}

We now show that for odd $k$, the situation is better.

\begin{lemma}
  \label{lem:odd}
  Let $K$ be a triangle and $k\ge 1$ be an odd integer. Any $w$ in
  $P_k(K)$ satisfying
  \begin{subequations}
    \label{eq:odd-unique}
    \begin{align}
      \label{eq:odd-edges}
      \int_E w\, q\, ds & = 0 &&\forall\, q \in P_{k-1}(E), \;
      \forall \text{ edges } E \subset \partial K,
      \\ \label{eq:odd-int}
      \int_K w\, r\, dx& =0 &&\forall\,  r \in P_{k-3}(K), \text{ if } k\ge 3,
    \end{align}
  \end{subequations}
  must vanish on $K$.
\end{lemma}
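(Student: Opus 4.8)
The plan is to reduce the statement to the behavior of $w$ on the three edges, and then to a single interior bubble argument. First I would observe that the edge conditions~\eqref{eq:odd-edges} pin down $w|_E$ up to a scalar: since $w|_E \in P_k(E)$ and $\int_E w|_E\, q\, ds = 0$ for all $q \in P_{k-1}(E)$, the restriction $w|_E$ lies in the $L^2(E)$-orthogonal complement of $P_{k-1}(E)$ in $P_k(E)$. That complement is one-dimensional, spanned by the degree-$k$ Legendre polynomial $L_E$ of the edge, so $w|_E = c_E L_E$ for a scalar $c_E$.

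The crux, and the only place the parity of $k$ enters, is the compatibility of these edge restrictions at the three vertices. Label the vertices $a_0,a_1,a_2$ and let $e_i$ be the edge opposite $a_i$. Because $k$ is odd, the Legendre polynomial takes opposite values at the two endpoints of each edge (with the normalization $P_k(1)=1=-P_k(-1)$), so evaluating $w|_{e_i}=c_{e_i}L_{e_i}$ at its two endpoints shows that the sum of the two vertex values of $w$ along each edge vanishes. Collecting this over the three edges produces the linear system $w(a_1)+w(a_2)=0$, $w(a_0)+w(a_2)=0$, $w(a_0)+w(a_1)=0$ in the three well-defined vertex values of $w$. Its coefficient matrix has determinant $2\neq 0$, forcing $w(a_0)=w(a_1)=w(a_2)=0$. (For even $k$ the endpoint values agree in sign, the analogous system is singular, and a nonzero solution survives -- exactly the degeneracy exhibited in Example~\ref{ex:example}.)

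Since the endpoint value of each $L_{e_i}$ is nonzero, vanishing vertex values force every $c_{e_i}=0$, so $w$ vanishes identically on $\partial K$. Writing $w$ in barycentric coordinates $\lambda_0,\lambda_1,\lambda_2$, the vanishing of $w$ on the edge $\{\lambda_i=0\}$ gives divisibility by $\lambda_i$, whence $w=\lambda_0\lambda_1\lambda_2\,p$ with $p\in P_{k-3}(K)$; this already settles the case $k=1$, where $P_{-2}(K)=\{0\}$. Finally I would invoke the interior condition~\eqref{eq:odd-int} with $r=p$ to obtain $\int_K \lambda_0\lambda_1\lambda_2\,p^2\,dx=0$. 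As $\lambda_0\lambda_1\lambda_2>0$ in the open triangle, the integrand is nonnegative and the integral vanishes only if $p\equiv 0$, whence $w=0$.

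I expect the main obstacle to be the bookkeeping in the vertex-compatibility step: fixing orientations and normalizations of the three edge Legendre polynomials so that the sign pattern $P_k(-1)=-P_k(1)$ is applied consistently and the resulting $3\times 3$ system is correctly seen to be nonsingular precisely for odd $k$. The remaining ingredients -- one-dimensionality of the edge orthogonal complement, divisibility by $\lambda_0\lambda_1\lambda_2$, and the final positivity argument -- are routine.
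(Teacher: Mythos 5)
Your proposal is correct and follows essentially the same route as the paper's proof: edge orthogonality forces $w|_E$ to be a scaled degree-$k$ Legendre polynomial, oddness of $k$ makes the two endpoint values of each edge negatives of one another so that $w$ must vanish on $\partial K$, and then the bubble factorization $w=\lambda_0\lambda_1\lambda_2\,p$ together with the interior condition (taking $r=p$) kills $w$. The only cosmetic difference is that where the paper argues by a sign-alternation contradiction around the three vertices, you solve the explicit $3\times 3$ system with determinant $2$ --- a step that is in fact orientation-independent (the sum of the two endpoint values vanishes regardless of how each edge is parametrized), so the bookkeeping you worried about is not an issue.
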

\begin{proof}
  Equation~\eqref{eq:odd-edges} implies that $w|_E$ must be a scaled
  Legendre polynomial of degree exactly $k$ on $E$. Since $k$ is odd,
  this implies that the values of $w$ at the endpoints of each edge
  must have opposite signs. This is impossible unless $w$ vanishes on
  $\d K$.  But if $w|_{\d K}=0$, then $w\equiv 0$ if $k=1$. If $k\ge
  3$, then $ w=\lambda_1\lambda_2\lambda_3s_{k-3}$, for some
  $s_{k-3}\in P_{k-3}(K)$ where $\lambda_i$ is the $i$th barycentric
  coordinate.  Then~\eqref{eq:odd-int} implies $w\equiv 0$ on $K$.
\end{proof}

\begin{theorem}   \label{thm:case2}
  In Case~2, for odd $k\ge 3$, these statements hold:
  \begin{enumerate}[\quad i)\;]
  \item \label{item:case2:inj}
    The DPG method is uniquely solvable.
  \item  \label{item:case2:energyerror}
    The solution $(u_h, \hqh)$ of the DPG  method  satisfies 
    \begin{equation}
      \label{eq:energyest2}
      \| u - u_h \|_{H^1(\om)} + \| \hq_n - \hqh \|_{H^{-1/2}(\d\oh)}
      \le 
      C h^{k-1}  \left( | u |_{H^{k}(\om)} + | f |_{H^{k-1}(\om)} \right).
    \end{equation}
  \item  \label{item:case2:l2error} If $\om$ is convex, then 
    \begin{equation}
      \label{eq:l2estimate2}
      \| u - u_h \|_{L^2(\om)}
      \le 
      C h^{k}  \left( | u |_{H^{k}(\om)} + | f |_{H^{k-1}(\om)} \right).
    \end{equation}
  \end{enumerate}
\end{theorem}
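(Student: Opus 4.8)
The three assertions can be handled in sequence, each reusing machinery already assembled. For part~(\ref{item:case2:inj}), since \autoref{asm:inj} holds in all three cases (as noted just before \autoref{ex:example}), \autoref{thm:inj} reduces unique solvability to the injectivity of $\hB_h$. To prove the latter, suppose $\hz_h \in \hX_h$ satisfies $\hb(\hz_h, y) = -\ip{\hz_h, y}_{\d\oh} = 0$ for all $y \in Y^r = P_k(\oh)$. Because $Y^r$ is a fully discontinuous (broken) space, I may test with $y$ supported on a single element $K$, so the condition decouples into $\ip{\hz_h, y}_{\d K} = 0$ for every $y \in P_k(K)$ and every $K \in \oh$. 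On a fixed $K$, the edge data $\hz_h|_{\d K}$ lies in $\prod_{E\subset\d K} P_{k-1}(E)$, a space of dimension $3k$, while the trace space $\{y|_{\d K} : y \in P_k(K)\}$ also has dimension $3k$ (its kernel being the bubble space $\lambda_1\lambda_2\lambda_3 P_{k-3}(K)$). Hence the $L^2(\d K)$ pairing between these two equal-dimensional spaces is nondegenerate in $\hz_h|_{\d K}$ if and only if it is nondegenerate in the trace; the latter is exactly the first assertion established within the proof of \autoref{lem:odd} (using $k$ odd), namely that a $y\in P_k(K)$ with vanishing edge moments against $P_{k-1}(E)$ must have vanishing trace on $\d K$. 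Therefore $\hz_h|_{\d K}=0$ for all $K$, so $\hz_h = 0$ and $\hB_h$ is injective.

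For part~(\ref{item:case2:energyerror}), the plan is to verify \autoref{asm:Pi} for the reduced test space $Y^r = P_k(\oh)$ and then invoke \autoref{thm:inex}. I define the Fortin operator $\varPi: H^1(\oh) \to P_k(\oh)$ elementwise by the moment conditions
\begin{align*}
\int_E (\varPi v - v)\, q\, ds = 0 \quad \forall q \in P_{k-1}(E),\ \forall E\subset\d K,
\qquad
\int_K (\varPi v - v)\, p\, dx = 0 \quad \forall p \in P_{k-3}(K).
\end{align*}
A dimension count shows the number of these conditions equals $\dim P_k(K)$, and their unisolvence on $P_k(K)$ is precisely \autoref{lem:odd}; this is exactly where oddness of $k$ is needed. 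The orthogonality $b(w_h, v - \varPi v) = 0$ then follows by elementwise integration by parts: for $w \in \Xoh = P_{k-1}(\oh)\cap X_0$ one has $\Delta w|_K \in P_{k-3}(K)$ and $\d_n w|_E \in P_{k-2}(E) \subset P_{k-1}(E)$, so both the volume and boundary terms vanish by the moment conditions, while $\hb(\hw, v-\varPi v)=0$ for $\hw \in \hX_h = P_{k-1}(\d\oh)\cap\hX$ holds directly by the edge conditions. The boundedness $\|\varPi v\|_{H^1(\oh)} \le C\|v\|_{H^1(\oh)}$ follows by a standard reference-element-and-scaling argument, the defining functionals being continuous on $H^1(K)$. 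With \autoref{asm:Pi} in hand, \autoref{thm:inex} yields quasioptimality, and the approximation estimates~\eqref{eq:approxest} with $l=k-1$ and $m=k$ (reflecting $\Xoh=P_{k-1}(\oh)\cap X_0$ and $\hX_h=P_{k-1}(\d\oh)\cap\hX$) give the rate $h^{k-1}$ of~\eqref{eq:energyest2}.

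Part~(\ref{item:case2:l2error}) then follows by repeating the duality argument of \autoref{thm:dual} essentially verbatim. The dual problem~\eqref{eq:7} and its regularity estimate~\eqref{eq:4} are unchanged, since they involve only the continuous form $a$. The only change is the approximation term~\eqref{eq:3terms}: with $z,d \in H^2(\om)$ and the flux space $\hX_h = P_{k-1}(\d\oh)\cap\hX$ identical to Case~1, each of the three infima is again $O(h)$, so that $C_3(h) = Ch$. \autoref{thm:duality} combined with~\eqref{eq:vepsbdd} then bounds $\|u-u_h\|_{L^2(\om)}$ by $Ch$ times the energy error of part~(\ref{item:case2:energyerror}), producing the rate $h^k$ of~\eqref{eq:l2estimate2}.

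The main obstacle is the construction of the Fortin operator in part~(\ref{item:case2:energyerror}): one must identify the correct set of elementwise degrees of freedom so that their count matches $\dim P_k(K)$, their unisolvence coincides exactly with \autoref{lem:odd} (thereby explaining why oddness of $k$ is indispensable), and they produce the precise orthogonality required by \autoref{asm:Pi}. Establishing $h$-uniform boundedness of $\varPi$ by scaling, though routine, is the most technical remaining step; once it is secured, parts~(\ref{item:case2:inj}) and~(\ref{item:case2:l2error}) are comparatively direct consequences of \autoref{thm:inj} and of the Case~1 duality argument.
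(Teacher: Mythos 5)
Your proof is correct and, for parts ii) and iii), essentially identical to the paper's: the same Fortin operator defined by edge moments against $P_{k-1}(E)$ and interior moments against $P_{k-3}(K)$, unisolvent on $P_k(K)$ precisely by \autoref{lem:odd}, bounded by scaling, followed by \autoref{thm:inex} with \eqref{eq:approxest}, and then the Case~1 duality argument with only the (inconsequential) degree changes in the first two infima of \eqref{eq:3terms} --- the paper says exactly this. You do depart from the paper in part i): the paper obtains unique solvability for free, as part of the conclusion of \autoref{thm:inex} once Assumption~\ref{asm:Pi} is verified, whereas you give a separate direct proof that $\hB_h$ is injective via the square $3k\times 3k$ pairing between $\prod_{E\subset\d K} P_{k-1}(E)$ and the trace space of $P_k(K)$, transposing the nondegeneracy established in the first step of the proof of \autoref{lem:odd}. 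That argument is sound (the elementwise decoupling is legitimate since $Y^r$ is a broken space), and it is in fact exactly the transpose argument the paper itself deploys later for Case~3 with $k=1$; here it is redundant given part ii), but it buys a self-contained injectivity statement independent of the Fortin construction. One small index slip in part ii): you invoke \eqref{eq:approxest} with $m=k$, which gives an $O(h^k)$ flux term but at the price of $|u|_{H^{k+1}(\om)}$ and $|f|_{H^k(\om)}$, norms not present in \eqref{eq:energyest2}; to reproduce the stated bound take $m=k-1$, i.e., approximate $\hq_n$ from the subspace $P_{k-2}(\d\oh)\cap\hX \subset \hX_h$ (the infimum over the larger space is no bigger), which yields $Ch^{k-1}\left( |u|_{H^{k}(\om)} + |f|_{H^{k-1}(\om)}\right)$ as required.
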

\begin{proof}
  By \autoref{thm:inex}, if we verify Assumption~\ref{asm:Pi}, then
  the DPG method is uniquely solvable.

  To do so, we first claim that there exists a
  $C_\varPi>0$ and a unique $\varPi v \in P_{k}(K)$ for any $v\in
  H^1(K)$, such that
  \begin{subequations}
    \label{eq:Piodd}
    \begin{align}
      \label{eq:Piodd-edges}
      \int_E (v-\varPi v) q\, ds & = 0 &&\forall\, q \in P_{k-1}(E), \;
      \forall \text{ edges } E \subset \partial K,
      \\ \label{eq:Piodd-int}
      \int_K (v-\varPi v) r\, dx& =0 &&\forall\,  r \in P_{k-3}(K)
      \\ \label{eq:Piodd-bd}
      \|\varPi v\|_{H^1(K)} \leq C_\varPi &\|v\|_{H^1(K)} && \forall\, v \in H^1(K).
    \end{align}
  \end{subequations}
  It is easy to see that~\eqref{eq:Piodd-edges}--\eqref{eq:Piodd-int}
  forms a square system for $\varPi$, so existence of $\varPi v$
  follows from uniqueness. But uniqueness is already proved by
  Lemma~\ref{lem:odd}.  The estimate~\eqref{eq:Piodd-bd} follows from a
  simple scaling argument.

  The energy error estimate~\eqref{eq:energyest2} now follows from
  \autoref{thm:inex} and \eqref{eq:approxest}. The $L^2$ error
  estimate~\eqref{eq:l2estimate2} follows from \autoref{thm:duality}:
  The required verification of Assumption~\ref{asm:dual} proceeds as
  in the proof of \autoref{thm:dual} -- the only difference is in the
  degrees of approximation spaces in the first two infimums
  in~\eqref{eq:3terms}, a difference that is inconsequential for the
  rest of the arguments.
\end{proof}

Theorem~\ref{thm:case2} explains all entries in the second row of
\autoref{tab:rates}. The convergence rate in~\eqref{eq:energyest2} is
suboptimal and limited by the low degree of $u_h$. This motivates the
next case.

\subsection{Case~3: A nonconforming analysis} \label{ssec:case3}

The only difference between Case~2 and Case~3 is that the degree of
$u_h$ is increased by one. We analyze Case~3 using a technique of
analysis different from the previous subsection, appealing to
\autoref{thm:pdpghybrid} and the second Strang lemma (see
e.g.~\cite{Ciarl78}) in the analyses of nonconforming methods.

\begin{theorem}   \label{thm:case3}
  In Case~3, for odd $k\ge 1$, these statements hold:
  \begin{enumerate}[\quad i)\;]
  \item \label{item:case3:inj}
    $\hB_h$ is injective and the DPG method is uniquely solvable.
  \item  \label{item:case3:energyerror}
    The $u_h$-component of the solution satisfies 
    \begin{equation}
      \label{eq:case3:energyest2}
      \| u - u_h \|_{H^1(\om)} 
      \le 
      C h^{k}  \left( | u |_{H^{k+1}(\om)} + | f |_{H^{k}(\om)} \right).
    \end{equation}
  \item  \label{item:case3:l2error} If $\om$ is convex, then 
    \begin{equation}
      \label{eq:case3:l2estimate2}
      \| u - u_h \|_{L^2(\om)}
      \le 
      C h^{k+1}  \left( | u |_{H^{k+1}(\om)} + | f |_{H^{k}(\om)} \right).
    \end{equation}
  \end{enumerate}
\end{theorem}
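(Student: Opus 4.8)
The plan is to treat the three claims in turn, exploiting that the interface space $\hX_h$ and the test space $Y^r$ coincide with those of Case~2. For (i), I would first prove that $\hB_h$ is injective. Since $Y^r = P_k(\oh)$ is a broken space, choosing a test function supported on a single element localizes the condition $\hB_h \hz_h = 0$ to each $K$: the edge data $\hz_h|_{\d K} \in \prod_E P_{k-1}(E)$ must be $L^2(\d K)$-orthogonal to every trace of $P_k(K)$. Counting dimensions gives $\dim \prod_E P_{k-1}(E) = 3k = \dim \operatorname{tr} P_k(K)$, so injectivity is equivalent to the statement that $\int_E w\,q = 0$ for all $q \in P_{k-1}(E)$ and all edges $E$ forces $w|_{\d K} = 0$ for $w \in P_k(K)$. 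This is exactly the first half of the argument in Lemma~\ref{lem:odd} (the Legendre / opposite-sign step), valid for every odd $k \ge 1$, including $k=1$ where no interior moments appear. With $\hB_h$ injective and Assumption~\ref{asm:inj} holding, Theorem~\ref{thm:inj} then yields unique solvability.

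For (ii), I would invoke Theorem~\ref{thm:pdpghybrid} to replace the analysis of $u_h$ by that of the reduced method~\eqref{eq:27}, which tests the conforming trial space $\Xoh = P_k(\oh)\cap H_0^1(\om)$ against $\Yohr \subseteq Y_0^r$. The key observation is that $\Xoh \subseteq Y_0^r$: a continuous, boundary-vanishing polynomial has zero jumps and is therefore weakly continuous in the sense of~\eqref{eq:30}. Taking the test function equal to the trial function then gives, via the \Poincare\ inequality, a mesh-independent inf-sup constant for the pair $(\Xoh, Y_0^r)$, hence for the optimal-test method~\eqref{eq:27}. I would then apply the second Strang lemma~\cite{Ciarl78}: the error is controlled by the best approximation plus the consistency error $\sup_{y \in Y_0^r} |b_0(u,y)-\ell(y)|/\|y\|_{H^1(\oh)}$. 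Integrating $-\Delta u = f$ by parts element by element identifies the consistency residual as $\ip{\hq_n, y}_{\d\oh}$; since $y \in Y_0^r$ is weakly continuous, this equals $\ip{\hq_n - \hw_h, y}_{\d\oh}$ for any $\hw_h \in \hX_h$ and is thus bounded by the flux approximation error. The estimates~\eqref{eq:approxest} (with $l=k$ and $m=k$) then produce the $O(h^k)$ rate of~\eqref{eq:case3:energyest2}.

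For (iii), I would run the Aubin-Nitsche duality of Theorem~\ref{thm:duality} exactly as in Theorem~\ref{thm:dual}: the dual problem~\eqref{eq:7} and its $H^2$-regularity on the convex $\om$ are unchanged, and only the polynomial degrees in the approximation step differ, still giving $C_3(h) = O(h)$. The remaining task is to bound $\|(x,\hx,\veps)-(\xoh,\hx_h,\veps^r)\|_{X_0\times\hX\times Y}$ by $O(h^k)$. The $H^1$ part is (ii); for the error representation, note that the mixed system~\eqref{eq:I} forces $\veps^r \in Y_0^r$, so testing~\eqref{eq:epsr} with $y=\veps^r$ and subtracting a best interface approximant gives $\|\veps^r\|_Y \le C(\,|u-u_h|_{H^1(\om)} + \inf_{\hw_h}\|\hq_n - \hw_h\|_{H^{-1/2}(\d\oh)}) = O(h^k)$.

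The main obstacle is the interface error $\|\hq_n - \hqh\|_{H^{-1/2}(\d\oh)}$, the final ingredient of the duality bound. In Cases~1 and~2 this came for free from the Fortin-operator quasioptimality of Theorem~\ref{thm:inex}, but that route is unavailable here: because $\Xoh$ and $Y^r$ both have degree $k$, no $\varPi$ satisfying Assumption~\ref{asm:Pi} can exist, the relevant moment conditions being over-determined, which is precisely why a nonconforming analysis is forced upon us. I expect to recover the flux bound from a mesh-independent inf-sup for $\hB_h$: given such a $\beta_0$, rewriting~\eqref{eq:1} and using the already-established bounds on $\|u-u_h\|_{H^1(\om)}$ and $\|\veps^r\|_Y$ yields $\|\hq_n - \hqh\|_{H^{-1/2}(\d\oh)} = O(h^k)$. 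Proving that this discrete inf-sup is uniform in $h$ --- quantifying the injectivity of (i) by a scaling argument on the reference element, combined with the global $\Hdiv{\om}$-based norm~\eqref{eq:qnorm} --- is the delicate step on which I would expect to spend the most effort.
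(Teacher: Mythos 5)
Your parts (i) and (ii) are correct and essentially reproduce the paper's argument. For (i), the paper actually splits into two cases: for $k\ge 3$ it deduces injectivity of $\hB_h$ from the already-established unisolvency of Case~2 (the two cases share $\hX_h$ and $Y^r$), and it reserves your transpose argument for $k=1$. Your version --- localize to one element using that $Y^r$ is broken, count $\dim \prod_E P_{k-1}(E) = 3k = \dim \operatorname{tr} P_k(K)$, and invoke the edge-moment half of Lemma~\ref{lem:odd} so that the square pairing matrix and hence its transpose is invertible --- is valid uniformly for all odd $k\ge 1$ and is arguably cleaner than the paper's detour through Case~2. Your (ii) is exactly the paper's proof: reduction via Theorem~\ref{thm:pdpghybrid} to~\eqref{eq:27}, stability from $\Xoh\subseteq Y_0^r$ plus Poincar\'e (the paper's chain culminating in~\eqref{eq:10}), identification of the consistency residual as $\hb(\hq_n,y)=\hb(\hq_n-\hrh,y)$ for $y\in Y_0^r$, and the approximation estimates~\eqref{eq:approxest} with $l=m=k$.

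The issue is (iii), and you have diagnosed it precisely but not closed it. Theorem~\ref{thm:duality} requires an $O(h^k)$ bound on the full triple norm, including $\|\hq_n-\hqh\|_{\hX}$; in Case~3 this is not delivered by Theorem~\ref{thm:inex}, and statement (ii) controls only the $u_h$-component. Your bound on $\|\veps^r\|_Y$ is correct and genuinely needed: since $\veps^r\in Y_0^r$ by the second equation of~\eqref{eq:I}, you may replace $\hqh$ by a best approximant $\hw_h$ of $\hq_n$ in $b(x-x_h,\veps^r)$, giving $\|\veps^r\|_Y\le C_2\bigl(\|u-u_h\|_{H^1(\om)}+\inf_{\hw_h}\|\hq_n-\hw_h\|_{\hX}\bigr)=O(h^k)$ --- note that~\eqref{eq:vepsbdd} alone would reintroduce the unknown flux error. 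But the flux estimate itself you only reduce to a uniform-in-$h$ discrete inf-sup for $\hB_h$, which you explicitly leave unproven; the localization of the minimal-extension norm~\eqref{eq:qnorm} and the $h$-scaling of the element-wise pairing are exactly where such an argument can lose powers of $h$, so as written your proof of~\eqref{eq:case3:l2estimate2} is incomplete. Two further remarks. First, your claim that no Fortin operator satisfying Assumption~\ref{asm:Pi} can exist in Case~3 is false for $k=1$: there $\Delta w_h=0$ and $\partial_n w_h|_E\in P_0(E)$, so the moment system reduces to the three edge conditions, which are square against $P_1(K)$ and unisolvent (a linear function vanishing at the three edge midpoints is zero); the over-determination argument only applies for $k\ge 3$. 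Second, in fairness, the paper's own proof of (iii) is a single sentence deferring to ``the lines of the proof of Theorem~\ref{thm:dual}'' and never supplies the Case~3 flux bound either; your proposal is more explicit than the source about what is actually required, but the delicate step you flag is a real gap in the proposal as submitted.
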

\begin{proof}
  First, observe that if $k \ge 3$, then by the unisolvency of the DPG
  method in Case~2, namely \autoref{thm:case2}(\ref{item:case2:inj}),
  its $B_h$ is injective, which implies by \autoref{thm:inj} that
  $\hB_h$ of Case~2 is injective. But since 
  the flux ($\hat{X}_h$) and test spaces ($Y^r$)
  of Case~3 are identical to that of Case~2, both cases have the same
  $\hB_h$. Hence $\hB_h$ of Case~3 is injective and consequently by
  \autoref{thm:inj}, $B_h$ of Case~3 is injective. Thus we have proved
  the first statement of the theorem for $k\ge 3$. For $k=1$, if
  $(\hB_h \hrh)(w) = -\ip{ \hrh, w}_{\d\oh}=0$ for all $w \in Y^r$,
  then
  \[
  \int_{\d K} w\, \hrh\, ds  = 0, 
  \qquad \forall\, w  \in P_{k}(K).
  \]
  The matrix of this system (for $\hrh$) is the transpose of the matrix
  of~\eqref{eq:odd-unique} (for $w$), which is invertible by
  Lemma~\ref{lem:odd}. Hence $\hrh=0$, i.e., $\hB_h$ is injective when
  $k=1$.

  Next we prove~\eqref{eq:case3:energyest2}.  Recall that $Y_0^r$ is
  defined in~\eqref{eq:30} and $\Yohr$ in~\eqref{eq:27}.  By
  \autoref{thm:pdpghybrid}, $u_h \in \Xoh$ satisfies~\eqref{eq:27},
  i.e.,
  \begin{equation}
    \label{eq:discreteeq}
    b_0(u_h, y) = (f,y)_\om, \qquad \forall y\in \Yohr.    
  \end{equation}
  We proceed by viewing this as a nonconforming Petrov-Galerkin
  discretization of
  \[
    b_0(u,y) = (f,y)_\om, \qquad \forall y\in H_0^1(\om)
  \]
  and bounding the consistency error in an argument akin to the second
  Strang lemma.  Let $C_p$ denote the constant, derived from
  \Poincare\ inequality, such that $\| w \|_{H^1(\om)} \le C_p \|
  \grad w \|_{L^2(\om)}$ for all $w \in H_0^1(\om)$. Then, for any
  $w_h \in \Xoh$ 
  \begin{align}
    \nonumber
    \|u_h  -w_h\|_{H^1(\om)}
    & \le C_p
    \sup_{0\ne z_h\in \Xoh}\frac{(\grad (u_h-w_h),\grad z_h)_\om}
    {\|\grad z_h\|_{L^2(\om)}}
    \leq 
    C_p^2\sup_{0\ne z_h\in \Xoh}\frac{b_0(u_h-w_h, z_h)}
    {\|z_h\|_{H^1(\om)}}
    \\ \nonumber
    & \leq C_p^2\sup_{0\ne y\in Y^r_0}\frac{b_0(u_h-w_h,y)}
    {\|y\|_Y}
    =  C_p^2 \| T_0^r(u_h-w_h) \|_Y
    = C_p^2\sup_{0\ne y\in \Yohr}\frac{b_0(u_h-w_h,y)}{\|y\|_Y}
    \\ \nonumber
    & = 
     C_p^2\sup_{0\ne y\in \Yohr}\frac{b_0(u_h-u,y) + b_0(u-w_h,y)}{\|y\|_Y}
     \\ \label{eq:10}
     & = 
     C_p^2\sup_{0\ne y\in \Yohr}\frac{(f,y)_\om - b_0(u,y) + b_0(u-w_h,y)}{\|y\|_Y}, 
  \end{align}
  where we have used~\eqref{eq:discreteeq}. Since $b( (u,\hq_n), y) =
  (f,y)_\om$ for all $y \in Y$, the term representing the consistency
  error in~\eqref{eq:10} can be written as $ (f,y)_\om - b_0(u,y) =
  \hb(\hq_n,y)$. By the definition of $Y_0^r$ (see~\eqref{eq:30}), we also
  have $ \hb(\hq_n,y) =\hb(\hq_n-\hrh,y) $ for any $\hrh \in \hX_h$ and
  $y \in Y_0^r$. Therefore,
\[
\|u_h  -w_h\|_{H^1(\om)}
\le C_p^2 
\sup_{0\ne y\in \Yohr}\frac{ b( (u-w_h, \hq_n - \hrh),y)}{\|y\|_Y}
\le C_p^2 C_2 C
\left(\| \hq_n - \hrh \|_{\hX}+\| u - w_h \|_{H^1(\om)}\right).
\]
Since $\hrh$ and $\hq_n$ are element-by-element traces of an $r_h$ in $R_{k-1}$
and $q=\grad u$, respectively,
  \[
  \| \hrh - \hq_n \|_{\hX} \le \| r_h - \grad u  \|_{\Hdiv\om},
  \]
so
  \begin{align*}
    \|u_h  -w_h\|_{H^1(\om)}
    \le C 
    \left( 
      \inf_{r_h \in R_{k-1}} \| r_h - \grad u \|_{\Hdiv\om} + 
      \| u - w_h \|_{H^1(\om)} 
      \right).
  \end{align*}
  Finally, by the triangle inequality, 
  \begin{align*}
    \| u - u_h \|_{H^1(\om)} 
    & \le \| u - w_h \|_{H^1(\om)} + \| u_h - w_h \|_{H^1(\om)}
    \\
    & \le C \left( \| u - w_h \|_{H^1(\om)} + 
    h^k ( | u|_{H^{k+1}(\om)} + |f|_{H^k(\om)})\right)
  \end{align*}
  for any $w_h \in \Xoh$. Choosing $w_h$ to be an appropriate
  interpolant, the proof of \eqref{eq:case3:energyest2} is finished.

  The final estimate \eqref{eq:case3:l2estimate2} is proved by
  verifying Assumption~\ref{asm:dual} (along the lines of the proof of
  \autoref{thm:dual}) and applying \autoref{thm:duality}.
\end{proof}

The final row of \autoref{tab:rates} is now completely explained by
\autoref{thm:case3}.

\section{Numerical Results} \label{sec:numerical}

In this section, we report results from a numerical experiment. The
presented DPG method for the Laplace equation was used to solve the
Dirichlet problem with $\om$ set to the unit square. The function $f$
was chosen so that the exact solution is $u=sin(\pi x) sin(\pi y)$.  We
construct an $n \times n$ uniform mesh by dividing $\om$ into $n^2$
congruent squares and further subdividing each square into two
triangles by connecting the diagonal of positive slope.  Its mesh size
is $h = \sqrt 2/n$. The method is applied on a sequence of such meshes
with geometrically increasing $n$. The implementation of the method is
done using FEniCS~\cite{LoggMardalEtAl2012a,LoggWells2010a}. Computed discretization errors in Cases 1, 2, and 3
are reported.

A baseline is provided by Case~1, reported in \autoref{tab:case1}. The
last column reports the rate of convergence in $L^2(\om)$,
approximately calculated using two successive rows by $\log_2( \| u -
u_h \|_{L^2(\om)}/\| u - u_{h/2} \|_{L^2(\om)})$. The
$H^1(\om)$-convergence rate is computed similarly. We observe from the
table that the $L^2(\om)$-rate is one order higher than the
$H^1(\om)$-rate, as expected from Theorem~\ref{thm:dual}. 

\begin{table}
\centering\footnotesize{
\caption{Case 1: $(k_u,k_q,k_v) = (k,k-1,k+1)$} 
\begin{tabular}{|l|cc|cc|} 
\hline
$n$    & $\|u-u_h\|_{H^1(\om)}$& rate & $\| u - u_h \|_{L^2(\om)}$ &  rate 
\\ [0.5ex] 
\hline 
\multicolumn{5}{|c|}{$k=1$} \\
\hline
2     &1.53E+00&	0.86&	2.61E-01&	1.65 \\
4     &8.43E-01&	0.96&	8.33E-02&	1.90 \\
8     &4.32E-01&	0.99&	2.23E-02&	1.97\\
16     &2.18E-01&	1.00&	5.67E-03&	1.99\\
32     &1.09E-01&	1.00&	1.42E-03&	2.00 \\
64     &5.45E-02 &  &3.57E-04  & \\
\hline 
\multicolumn{5}{|c|}{$k=2$} \\
\hline
2     &4.67E-01&	1.85&	3.24E-02&	2.91\\
4     & 1.29E-01&	1.95&	4.31E-03&	2.98 \\
8     &3.34E-02&	1.99&	5.47E-04&	2.99 \\
16     &8.42E-03&	2.00&	6.87E-05&	3.00 \\
32     &2.11E-03& 2.00 &		8.60E-06&3.00 \\
64     &5.28E-04 &  &1.08E-06  & \\
\hline 
\multicolumn{5}{|c|}{$k=3$} \\
\hline
2     &1.01E-01&	2.94&	5.52E-03&	4.04 \\
4     &1.32E-02&	3.00&	3.36E-04&	4.07\\
8     &1.65E-03&	3.01&	2.00E-05&	4.04\\
16     &2.06E-04&	3.00&	1.22E-06&	4.02 \\
32     &2.57E-05&		&7.50E-08& \\
\hline
\end{tabular} 
\label{tab:case1} }
\end{table} 

Next, we consider Case~2, reported in \autoref{tab:case2}. The table
is computed similarly to Case~1, however only odd $k$ are considered
since the problem in Case~2 is not well posed for even $k$ -- see
Example~\ref{ex:example}. We observe that the $H^1(\om)$-convergence
is $O(h^{k-1})$, confirming the first theoretical estimate of
\autoref{thm:case2}. The rate of convergence is increased by one in
the next column in accordance with the second estimate of
\autoref{thm:case2}.

\begin{table}
\centering\footnotesize{
\caption{Case 2: $(k_u,k_q,k_v) = (k-1,k-1,k)$} 
\begin{tabular}{|l|cc|cc|} 
\hline
$n$    & $\|u-u_h\|_{H^1(\om)}$& rate & $\| u - u_h \|_{L^2(\om)}$ &  rate \\ [0.5ex] 
\hline 
\multicolumn{5}{|c|}{$k=3$} \\
\hline 
2     &4.67E-01&	1.85&	3.24E-02&	2.91 \\
4     &1.29E-01&	1.95&	4.31E-03&	2.98\\
8     &3.34E-02&	1.99&	5.47E-04&	2.99 \\
16     &8.42E-03&	2.00&	6.87E-05&	3.00\\
32     & 2.11E-03&2.00  &		8.60E-06&3.00 \\
64     &5.28E-04		 &  &1.08E-06  & \\
\hline 
\multicolumn{5}{|c|}{$k=5$} \\
\hline
2     &1.70E-02&	3.92&	7.24E-04&	4.90\\
4     &1.13E-03&	3.98&	2.43E-05&	4.97\\
8     &7.14E-05&	4.00&	7.76E-07&	4.99 \\
16     &4.48E-06& 4.00  &		2.44E-08&5.00 \\
32     &2.80E-07		 &  &7.64E-10  & \\
\hline
\end{tabular} }
\label{tab:case2} 
\end{table} 

Results from Case~3 are reported in \autoref{tab:case3}. We observe
that the $H^1(\om)$-convergence rate is $k+1$, the same as in Case~1,
even though the test space is of a lesser degree.  These observations
illustrate and confirm the theoretical results of \autoref{thm:case3}.

\begin{table}
\centering\footnotesize{
\caption{Case 3: $(k_u,k_q,k_v) = (k,k-1,k)$} 
\begin{tabular}{|l|cc|cc|} 
\hline
$n$    & $\|u-u_h\|_{H^1(\om)}$& rate & $\| u - u_h \|_{L^2(\om)}$ &  rate \\ [0.5ex] 
\hline
\multicolumn{5}{|c|}{$k=1$} \\
\hline
2     &1.59E+00&	0.87&	3.08E-01&	1.38 \\
4     &8.71E-01&	0.99&	1.18E-01&	1.82 \\
8     &4.37E-01&	1.00&	3.34E-02&	1.95 \\
16     &2.18E-01&	1.00&	8.63E-03&	1.99 \\
32     &1.09E-01& 1.00 &		2.18E-03& 2.00 \\
64   &5.45E-02		   &  &5.45E-04  & \\
\hline 
\multicolumn{5}{|c|}{$k=3$} \\
\hline
2     &1.01E-01&	2.94&	5.38E-03&	3.93 \\
4     &1.32E-02&	3.00&	3.53E-04&	4.02 \\
8     &1.66E-03&	3.01&	2.18E-05&	4.02 \\
16     &2.06E-04&	3.00&	1.34E-06&	4.01 \\
32     &2.57E-05&	3.00&	8.32E-08&	4.00 \\
64   & 3.21E-06&		&5.19E-09& \\
\hline
\multicolumn{5}{|c|}{$k=5$} \\
\hline
2     &2.45E-03&	4.94&	8.82E-05&	5.89 \\
4     &7.94E-05&	5.00&	1.49E-06&	5.98 \\
8     &2.49E-06&	5.00&	2.36E-08&	6.00 \\
16     &7.77E-08&5.00  &		3.69E-10&6.01	 \\
32     &2.42E-09		 &	&5.71E-12	&	 \\
\hline
\end{tabular} 
\label{tab:case3} }
\end{table}

\hilit{Other possibilities exist besides the three cases investigated, so,}
as a caveat, we present observations of suboptimal
convergence in the case $(k_u,k_q,k_v) = (3,0,3)$.  The DPG method is
uniquely solvable in this case: This would follow from
\autoref{thm:inj} once we prove that $\hB_h$ is injective. If $\hB_h
\hat z_n =0$, then by definition~\eqref{eq:Bh}, $\hb(\hz_n,v)=0$ for
all $v \in P_3(\oh),$ so in particular,
\[
\hz_n \in P_0(\d\oh): \quad 
\hb(\hz_n,v)=0,
\quad \forall\; v \in P_2(\oh).
\]
This implies, by the already known unisolvency of Case~1 with $k=1$,
i.e., $(k_u,k_q,k_v) = (1,0,2)$, and \autoref{thm:inj}, that
$\hz_n=0$. Therefore, the method is well-defined for the
$(k_u,k_q,k_v) = (3,0,3)$ case. Yet, the theory we presented does
\hilit{not guarantee optimal convergence rates in this case.  The
  numerical results reported in \autoref{tab:poor} show that the
  practically observed convergence rates in $H^1(\om)$ and $L^2(\om)$
  are indeed suboptimal in this case.  In fact, we observe second
  order convergence in $L^2(\om)$ as in case 1 with $k=1$.  An error
  analysis that proceeds exactly like the error analysis of case~3
  will predict this suboptimal rate (the rate being limited by the
  order $k_q$ of $\hX_h$).  However, the practically observed
  $H^1(\om)$ rates are higher than what the same analysis would
  predict.}

\begin{table}
\centering\footnotesize{
\caption{Poor $H^1(\om)$ and $L^2(\om)$ convergence for the case $(k_u,k_q,k_v) = (k,k-3,k)$} 
\begin{tabular}{|l|cc|cc|} 
\hline
$n$    & $\|u-u_h\|_{H^1(\om)}$& rate & $\| u - u_h \|_{L^2(\om)}$ &  rate \\ [0.5ex] 
\hline 
\multicolumn{5}{|c|}{$k=3$} \\
\hline
2     &1.02E-01&	2.85&	6.68E-03&	2.50 \\
4     &1.42E-02&	2.70&	1.18E-03&	1.99 \\
8     &2.18E-03&	2.38&	3.14E-04&	1.96 \\
16     &4.21E-04&	2.13&	8.06E-05&	1.99 \\
32     &9.59E-05&	&	2.03E-05&	\\
\hline
\end{tabular} 
\label{tab:poor}}
\end{table} 




\bigskip

\subsection*{Acknowledgements}

The authors are grateful to Leszek Demkowicz for discussions on the
subject and for the interaction opportunities provided in the
``ICES/USACM Workshop on Minimum Residual and Least Squares Finite
Element Methods'' (2013) where many questions such as those
addressed in this paper were formulated. Timaeus Bouma gratefully
acknowledges guidance from Tzanio Kolev during an internship at
Lawrence Livermore National Laboratory, where the issue of reducing
the degree of DPG test spaces was identified as practically relevant.


\end{document}